\providecommand{\U}[1]{\protect\rule{.1in}{.1in}}
\begin{document}

\title{Minimum-Time Frictionless Atom Cooling \\in Harmonic Traps\thanks{This work was
supported by AFOSR under grant \#FA9550-10-1-0146.}}
\author{Dionisis Stefanatos\footnotemark[2]
\and Heinz Schaettler \footnotemark[2]
\and Jr-Shin Li \footnotemark[2]}
\maketitle

\begin{abstract}
Frictionless atom cooling in harmonic traps is formulated as a time-optimal
control problem and a synthesis of optimal controlled trajectories is obtained.

\end{abstract}

\renewcommand{\thefootnote}{\fnsymbol{footnote}}

\footnotetext[2]{Department of Electrical and Systems Engineering, Washington
University, St. Louis, Missouri, 63130 USA (\texttt{dionisis@seas.wustl.edu}, \texttt{jsli@seas.wustl.edu}, \texttt{hms@wustl.edu}%
).}

\renewcommand{\thefootnote}{\arabic{footnote}}

\begin{keywords}
optimal control, optimal synthesis, atom cooling
\end{keywords}

\begin{AMS}
49K15, 93C15, 81V45
\end{AMS}

\pagestyle{myheadings} \thispagestyle{plain} \markboth{D. STEFANATOS, H. SCHAETTLER, AND J.-S. LI}{TIME-OPTIMAL FRICTIONLESS ATOM COOLING}

\section{Introduction}
During the last decades, a wealth of analytical and numerical tools from control theory and optimization have been successfully employed to analyze and control the performance of quantum mechanical systems, advancing quantum technology in areas as diverse as physical chemistry, metrology, and quantum information processing \cite{Mabuchi05}. Although measurement-based feedback control \cite{Doherty00} and the promising coherent feedback control \cite{James08} have gained considerable attention, open-loop control has been proven quite effective. Controllability results for finite- and infinite-dimensional quantum mechanical systems have been obtained, clarifying the control limits on these systems \cite{Huang83,Albertini01,Altafini02,Agrachev03,Li09IEEE,Chambrion09,Bloch10,Beauchard10}. Analytical solutions for optimal control problems defined on low-dimensional quantum systems have been derived, leading to novel pulse sequences with unexpected gains compared with those traditionally used \cite{Khaneja01,D'Alessandro01,Boscain02,Stefanatos04,Stefanatos05,Bonnard09,Lapert10,Stefanatos10,Bonnard10}. And numerical optimization methods, based on gradient algorithms or direct approaches, have been used to address more complex tasks and to minimize the effect of the ubiquitous experimental imperfections \cite{Peirce88,Tannor92,Khaneja05,Li09,Schulte10,Maximov10,Li11,Motzoi11,Caneva11}.

At the heart of modern quantum technology lies the efficient cooling of trapped atoms, since it has created the ultimate physical systems thus far
for precision spectroscopy, frequency standards, and
even tests of fundamental physics \cite{Wieman99}, as well as candidate systems for quantum information processing \cite{Cirac04}.
In the present article we study a time-optimal control problem related to the frictionless cooling of atoms trapped in a time-dependent harmonic potential.
Frictionless atom cooling in a harmonic trapping potential is defined as the problem of
changing the harmonic frequency of the trap to some lower final value, while
keeping the populations of the initial and final levels invariant, thus
without generating friction and heating. Conventionally, an adiabatic process
is used where the frequency is changed slowly and the system follows the instantaneous
eigenvalues and eigenstates of the time-dependent Hamiltonian. The drawback of this
method is the long necessary times which may render it impractical. A way to bypass this problem is to use the theory of the
time-dependent quantum harmonic oscillator \cite{Lewis69} to prepare the same
final states and energies as with the adiabatic process at a given final time, without necessarily
following the instantaneous eigenstates at each moment. Achieving this goal in minimum time
has many important potential applications. For example, it can be used to
reach extremely low temperatures inaccessible by standard cooling techniques
\cite{Leanhardt03}, to reduce the velocity dispersion and collisional shifts
for spectroscopy and atomic clocks \cite{Bize05}, and in adiabatic quantum
computation \cite{Aharonov07}. It is also closely related to the problem of
moving in minimum time a system between two thermal states, as for example in
the transition from graphite to diamond \cite{Salamon09}.

It was initially
proved that minimum transfer time for the aforementioned problem can be achieved with ``bang-bang" real
frequency controls \cite{Salamon09}. Later, it was
shown that when the restriction for real frequencies is relaxed, allowing the
trap to become an expulsive parabolic potential at some time intervals,
shorter transfer times can be obtained, leading to a ``shortcut to adiabaticity" \cite{Chen10}.
In our recent work \cite{Stefanatos10PRA}, we formulated frictionless atom
cooling as a minimum-time optimal control problem, permitting the frequency to
take real and imaginary values in specified ranges. We showed that the optimal
solution has again a ``bang-bang" form and used this fact to obtain estimates of
the minimum transfer times for various numbers of switchings. In the present
article we complete our previous work by fully solving the corresponding
time-optimal control problem and obtaining the optimal synthesis. As the terminal
point in the problem is varied, a rather unconventional and interesting switching
structure involving cut-loci and discontinuous switching curves is revealed.

\section{Formulation of the problem in terms of optimal control}

The evolution of the wavefunction $\psi(t,x)$ of a particle in a
one-dimensional parabolic trapping potential with time-varying frequency
$\omega(t)$ is given by the Schr\"{o}dinger equation
\begin{equation}
\label{Schrodinger}i\hbar\frac{\partial\psi}{\partial t}=\left[  -\frac
{\hbar^{2}}{2m}\frac{\partial^{2}}{\partial x^{2}}+\frac{m\omega^{2}%
(t)}{2}x^{2}\right]  \psi,
\end{equation}
where $m$ is the particle mass and $\hbar$ is Planck's constant; $x\in\mathbb{R}$ and $\psi$ is a square-integrable function on the real line. When
$\omega(t)$ is \emph{constant}, the above equation can be solved by separation
of variables and the solution is
\begin{equation}
\label{solution_constant}\psi(t,x)=\sum_{n=0}^{\infty}c_{n} e^{-iE^{\omega
}_{n}t/\hbar}\Psi^{\omega}_{n}(x),
\end{equation}
where
\begin{equation}
\label{energy}E^{\omega}_{n}=\left(n+\frac{1}{2}\right)\hbar\omega,\,n=0,1,\ldots
\end{equation}
are the eigenvalues and
\begin{equation}
\label{eigenstate}\Psi^{\omega}_{n}(x)=\frac{1}{\sqrt{2^{n}n!}}\left(
\frac{m\omega}{\pi\hbar}\right)  ^{1/4}\exp{\left(  -\frac{m\omega}{2\hbar
}x^{2}\right)  } H_{n}\left(  \sqrt{\frac{m\omega}{\hbar}}x\right)
\end{equation}
are the eigenfunctions of the corresponding time-independent equation
\begin{equation}
\label{time_independent}\left(  -\frac{\hbar^{2}}{2m}\frac{d^{2}%
}{d x^{2}}+\frac{m\omega^{2}}{2}x^{2}\right)  \Psi^{\omega}%
_{n}=E^{\omega}_{n}\Psi^{\omega}_{n}.\nonumber
\end{equation}
Here $H_{n}$ in (\ref{eigenstate}) is the Hermite polynomial of degree
$n$. The coefficients $c_{n}$ in (\ref{solution_constant}) can be found from
the initial condition
\begin{equation}
\label{coefficients}c_{n}=\int_{-\infty}^{\infty}\psi(0,x)\Psi^{\omega}%
_{n}(x)dx.\nonumber
\end{equation}

\begin{figure}[t]
\centering
\includegraphics[width=0.8\linewidth]{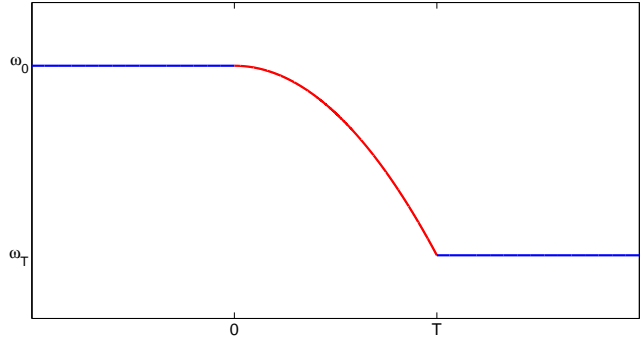}\caption{Time evolution of the
harmonic trap frequency.}%
\label{fig:frequency}%
\end{figure}


Consider now the case shown in Fig.\ \ref{fig:frequency}, where $\omega
(t)=\omega_{0}$ for $t\leq0$ and $\omega(t)=\omega_{T}<\omega_{0}$ for $t\geq
T$. This corresponds to a temperature reduction by a factor $\omega_{T}%
/\omega_{0}$, if the initial and final states are canonical \cite{Chen10}. For
frictionless cooling, the path $\omega(t)$ between these two values should be
chosen so that the populations of all the oscillator levels $n=0,1,2,\ldots$
for $t\geq T$ are equal to the ones at $t=0$. In other words, if
\begin{equation}
\label{initial_condition}\psi(0,x)=\sum_{n=0}^{\infty}c_{n}(0)\Psi^{\omega
_{0}}_{n}(x),\nonumber
\end{equation}
and
\begin{equation}
\label{final_condition}\psi(t,x)=\sum_{n=0}^{\infty}c_{n}(t)\Psi^{\omega_{T}%
}_{n}(x),\,t\geq T,\nonumber
\end{equation}
then frictionless cooling is achieved when
\begin{equation}
\label{frictionless_cooling}|c_{n}(t)|^{2}=|c_{n}(0)|^{2},\,t\geq T,
n=0,1,2,\ldots
\end{equation}
Among all the paths $\omega(t)$ that result in (\ref{frictionless_cooling}),
we would like to find one that achieves frictionless cooling in minimum time
$T$. In the following we provide a sufficient condition on $\omega(t)$ for
frictionless cooling and we use it to formulate the corresponding time-optimal
control problem.

\begin{proposition}
\label{prop:fr_cooling} If $\omega(t)$, with $\omega(0)=\omega_{0}$ and
$\omega(t)=\omega(T)=\omega_{T}$ for $t\geq T$ is such that the Ermakov
equation \cite{Ermakov}
\begin{equation}
\label{Ermakov}\ddot{b}(t)+\omega^{2}(t)b(t)=\frac{\omega_{0}^{2}}{b^{3}(t)}%
\end{equation}
has a solution $b(t)$ with $b(0)=1,\dot{b}(0)=0$ and $b(t)=b(T)=(\omega
_{0}/\omega_{T})^{1/2},t\geq T$, then condition (\ref{frictionless_cooling})
for frictionless cooling is satisfied.
\end{proposition}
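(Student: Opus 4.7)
The plan is to invoke the Lewis--Riesenfeld invariant theory for the time-dependent quantum harmonic oscillator. First, I would associate with any solution $b(t)$ of the Ermakov equation (\ref{Ermakov}) the Hermitian operator
\[
I(t) \;=\; \frac{1}{2}\left[\frac{(b\,p - m\dot{b}\,x)^{2}}{m} \;+\; m\omega_{0}^{2}\,\frac{x^{2}}{b^{2}}\right],
\]
where $p=-i\hbar\,\partial_{x}$. A direct computation, in which the right-hand side $\omega_{0}^{2}/b^{3}$ of (\ref{Ermakov}) is used precisely to cancel the cross terms between $\partial_{t}I$ and $(i\hbar)^{-1}[H(t),I]$, shows that $dI/dt=0$, so that $I(t)$ is a dynamical invariant of (\ref{Schrodinger}).

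Next, I would diagonalize $I(t)$. The rescaling $y=x/b$ together with a gauge factor absorbing the $\dot{b}x^{2}$-contribution maps the spectral problem for $I(t)$ onto the time-independent harmonic oscillator eigenvalue problem at frequency $\omega_{0}$. This produces an orthonormal family of eigenfunctions $\phi_{n}(t,x)$ with constant eigenvalues $\lambda_{n}=(n+\tfrac{1}{2})\hbar\omega_{0}$, explicitly of the form
\[
\phi_{n}(t,x) \;=\; \frac{1}{\sqrt{b}}\,\exp\!\left(\frac{i\,m\,\dot{b}\,x^{2}}{2\hbar\,b}\right)\Psi^{\omega_{0}}_{n}\!\left(\frac{x}{b}\right).
\]
The Lewis--Riesenfeld theorem then yields that any solution of (\ref{Schrodinger}) admits an expansion $\psi(t,x)=\sum_{n}c_{n}e^{i\alpha_{n}(t)}\phi_{n}(t,x)$ with \emph{time-independent} coefficients $c_{n}$ and auxiliary phases $\alpha_{n}(t)$; in particular $|c_{n}|^{2}$ is conserved along the evolution.

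Finally, I would read off the populations at the endpoints by substituting the boundary data for $b$. At $t=0$, $b(0)=1$ and $\dot{b}(0)=0$ give $\phi_{n}(0,x)=\Psi^{\omega_{0}}_{n}(x)$, so the conserved $|c_{n}|^{2}$ coincide with the initial populations. For $t\geq T$, $b(t)=(\omega_{0}/\omega_{T})^{1/2}$ is constant, hence $\dot{b}(T)=0$; the dilation $x\mapsto x/b(T)$ together with the prefactor $1/\sqrt{b(T)}$ then turns $\Psi^{\omega_{0}}_{n}(x/b(T))$ into $\Psi^{\omega_{T}}_{n}(x)$ (up to a global phase), as one checks directly from (\ref{eigenstate}). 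Therefore the coefficients of $\psi$ in the basis $\{\Psi^{\omega_{T}}_{n}\}$ at $t\geq T$ have the same moduli as those in $\{\Psi^{\omega_{0}}_{n}\}$ at $t=0$, which is exactly (\ref{frictionless_cooling}).

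The main obstacle is the verification of the invariant identity $dI/dt=0$: this is where the Ermakov equation enters in an essential (and non-obvious) way, by producing the precise term $\omega_{0}^{2}/b^{3}$ needed to cancel the commutator $[H,I]/(i\hbar)$ against $\partial_{t}I$. Once that identity is established and the eigenfunctions $\phi_{n}$ are written down, the rest of the argument is the algebraic matching of harmonic-oscillator bases through the dilation $y=x/b$, which is routine provided $\dot{b}$ vanishes at both endpoints, as our hypotheses guarantee.
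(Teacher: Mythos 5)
Your proposal is correct, and it reaches the same conclusion by a recognizably different logical route than the paper. The paper's proof is a direct computation: it reduces to an initial eigenstate $\psi(0,x)=\Psi^{\omega_0}_n(x)$ by linearity, substitutes the Kagan scaling ansatz
$\psi=b^{-1/2}\phi(\tau,\chi)\exp[imx^2\dot b/(2\hbar b)]$ with $\chi=x/b$ and the rescaled time $\tau=\int_0^t dt'/b^2$ into the Schr\"odinger equation, and observes that the Ermakov equation collapses the transformed equation to a \emph{fixed}-frequency oscillator at $\omega_0$; this yields the wavefunction explicitly, including the phase $\alpha_n(t)=-E^{\omega_0}_n\tau(t)/\hbar$, after which the endpoint matching via the dilation identity $(\omega_T/\omega_0)^{1/4}\Psi^{\omega_0}_n(\sqrt{\omega_T/\omega_0}\,x)=\Psi^{\omega_T}_n(x)$ finishes the argument. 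You instead package the same transformation as the diagonalization of the Lewis--Riesenfeld invariant $I(t)$ and invoke the LR expansion theorem to get constancy of the $|c_n|^2$ for an arbitrary superposition at once; your eigenfunctions $\phi_n(t,x)$ are exactly the paper's transformed modes, and your endpoint matching (using $\dot b=0$ and the value of $b$ at both ends) is identical to the paper's. What your route buys is conceptual economy and generality --- conservation of populations in the invariant's eigenbasis is immediate once $dI/dt=0$ is verified, and no reduction to a single eigenstate is needed; what it costs is that you must actually carry out the verification $\partial_t I+(i\hbar)^{-1}[H,I]=0$ (where the operator ordering in $\partial_t(b p-m\dot b x)^2$ needs a little care) and either prove or cite the Lewis--Riesenfeld theorem, whereas the paper's substitution is self-contained and delivers the explicit phases as a by-product. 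Both arguments place the burden on the Ermakov equation at the same spot, and both use the vanishing of $\dot b$ at the endpoints in the same way, so I see no gap --- only the standard computations you have deferred.
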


\begin{proof}
Without loss of generality we assume that the initial state is the
eigenfunction corresponding to the $n$-th level $\psi(0,x)=\Psi^{\omega_{0}
}_{n}(x)$. We will show that when the hypotheses of Proposition
\ref{prop:fr_cooling} hold then $\psi(t,x)=e^{i\alpha_{n}(t)}\Psi^{\omega_{T}
}_{n}(x),t\geq T$, where $\alpha_{n}(t)$ is a global (independent of the spatial
coordinate $x$) phase factor. This and the linearity of (\ref{Schrodinger})
imply that if $\psi(0,x)=\sum_{n=0}^{\infty}c_{n}(0)\Psi^{\omega_{0}}_{n}(x)$
then $\psi(t,x)=\sum_{n=0}^{\infty}c_{n}(0)e^{i\alpha_{n}(t)}\times\\\Psi^{\omega_{T}
}_{n}(x), t\geq T$, thus condition (\ref{frictionless_cooling}) is satisfied.

The frequency variations in the trapping potential change the time and
distance scales and motivate the use of the following ``ansatz", introduced by Kagan et al. \cite{Kagan96}, in
(\ref{Schrodinger})
\begin{equation}
\label{ansatz}\psi(t,x)=\frac{1}{\sqrt{b(t)}}\phi(\tau,\chi)\exp{\left[
i\frac{mx^{2}}{2\hbar}\frac{\dot{b}(t)}{b(t)}\right]  },\nonumber
\end{equation}
where $\chi=x/b(t)$, $\tau=\tau(t)$ is a time rescaling, and the distance scale $b(t)$ satisfies (\ref{Ermakov})
and the accompanying boundary conditions. We obtain
\begin{equation}
\label{intermediate}i\hbar\frac{\partial\phi}{\partial\tau}\left(  \frac
{d\tau}{dt}b^{2}\right)  =\left[  -\frac{\hbar^{2}}{2m}\frac{\partial^{2}%
}{\partial\chi^{2}}+\frac{m(\ddot{b}+\omega^{2}b)b^{3}}{2}\chi^{2}\right]
\phi.
\end{equation}
If we choose the time scale $\tau(t)$ such that
\begin{equation}
\label{time_rescaling}\tau(t)=\int_{0}^{t}\frac{dt^{\prime}}{b^{2}(t^{\prime
})},
\end{equation}
then (\ref{intermediate}) becomes
\begin{equation}
i\hbar\frac{\partial\phi}{\partial\tau}=\left(  -\frac{\hbar^{2}}{2m}%
\frac{\partial^{2}}{\partial\chi^{2}}+\frac{m\omega_{0}^{2}}{2}\chi
^{2}\right)  \phi\nonumber
\end{equation}
with the initial condition $\phi(0,\chi)=\Psi^{\omega_{0}}_{n}(\chi)$. So
$\phi(\tau,\chi)=e^{-iE^{\omega_{0}}_{n}\tau/\hbar}\Psi^{\omega_{0}}_{n}%
(\chi)$ and
\begin{equation}
\label{almost}\psi(t,x)=\exp{\left[  i\frac{mx^{2}}{2\hbar}\frac
{\dot{b}(t)}{b(t)}\right]  }\times \exp\left[-i\frac{E^{\omega_{0}}_{n}\tau(t)}{\hbar}\right]%
\times\frac{1}{\sqrt{b(t)}}\Psi^{\omega_{0}}_{n}(\frac{x}{b(t)})
\end{equation}
We will show that for $t\geq T$, where $b(t)=(\omega_{0}/\omega_{T})^{1/2}$,
$\psi(t,x)$ has the desired form. We examine separately each of the three
terms in (\ref{almost}). Since $\dot{b}(t)=0$ in this time interval, the first
exponential is equal to unity. About the second exponential, observe from
(\ref{time_rescaling}) that
\begin{equation}
\tau(t)=\tau(T)+\frac{\omega_{T}}{\omega_{0}}(t-T),\nonumber
\end{equation}
since $b(t)=(\omega_{0}/\omega_{T})^{1/2},t\geq T$. Also, from (\ref{energy})
we have $E^{\omega_{0}}_{n}=E^{\omega_{T}}_{n}\omega_{0}/\omega_{T}$. Thus
\begin{equation}
e^{-iE^{\omega_{0}}_{n}\tau(t)/\hbar}=e^{-iE^{\omega_{0}}_{n}\tau(T%
)/\hbar}e^{-iE^{\omega_{T}}_{n}(t-T)/\hbar}\nonumber
\end{equation}
The last term in (\ref{almost}) satisfies
\begin{equation}
\left(  \frac{\omega_{T}}{\omega_{0}}\right)  ^{1/4}\Psi^{\omega_{0}}%
_{n}(\sqrt{\frac{\omega_{T}}{\omega_{0}}}x)=\Psi^{\omega_{T}}_{n}(x),\nonumber
\end{equation}
as it can be verified using (\ref{eigenstate}). Putting all these together we
see that $\psi(t,x)$ has the desired form for $t\geq T$.
\end{proof}

In order to find the path $\omega(t), 0\leq t\leq T$, that accomplishes
frictionless cooling in minimum time $T$, we express the problem using the
language of optimal control, incorporating possible restrictions on
$\omega(t)$ due, for example, to experimental limitations. If we set
\begin{equation}
x_{1}=b,\,x_{2}=\frac{\dot{b}}{\omega_{0}},\,u(t)=\frac{\omega^{2}(t)}%
{\omega_{0}^{2}},
\end{equation}
and rescale time according to $t_{\mbox{new}}=\omega_{0} t_{\mbox{old}}$, we
obtain the following system of first order differential equations, equivalent
to the Ermakov equation (\ref{Ermakov})
\begin{align}
\label{system1}\dot{x}_{1}  &  = x_{2},\\
\label{system2}
\dot{x}_{2}  &  = -ux_{1}+\frac{1}{x_{1}^{3}}.
\end{align}
If we set $\gamma=(\omega_{0}/\omega_{T})^{1/2}>1$, the time optimal problem
takes the following form

\newtheorem{problem}{problem} \begin{problem}\label{problem}
Find $-u_1\leq u(t)\leq u_2$ with $u(0)=1, u(T)=1/\gamma^4$ such that starting from $(x_1(0),x_2(0))=(1,0)$, the above system reaches the final point $(x_1(T),x_2(T))=(\gamma,0), \gamma>1$, in minimum time $T$.
\end{problem}

The boundary conditions on the state variables $(x_{1},x_{2})$ are equivalent
to those for $b,\dot{b}$, while the boundary conditions on the control
variable $u$ are equivalent to those for $\omega$, so the requirements of Proposition \ref{prop:fr_cooling} are satisfied. Parameters $u_{1},u_{2}>0$
define the allowable values of $u(t)$ and it is $u_{2}\geq u(0)=1$. Note that
the possibility $\omega^{2}(t)<0$ (expulsive parabolic potential) for some
time intervals is permitted, Chen et al. \cite{Chen10}. It is natural to
consider that also $u_{1}\geq1$, i.e. we can at least achieve the negative
potential $V(x)=-m\omega_{0}^{2}x^{2}/2$. Finally observe that the above
system describes the one-dimensional Newtonian motion of a unit-mass particle,
with position coordinate $x_{1}$ and velocity $x_{2}$. The acceleration
(force) acting on the particle is $-ux_{1}+1/x_{1}^{3}$. This point of view
can provide useful intuition about the time-optimal solution, as we will see later.


In the next section we solve the following optimal control problem

\begin{problem}\label{problem1}
Find $-u_1\leq u(t)\leq u_2$, with $u_1,u_2\geq 1$, such that starting from $(x_1(0),x_2(0))=(1,0)$, the system above reaches the final point $(x_1(T),x_2(T))=(\gamma,0), \gamma>1$, in minimum time $T$.
\end{problem}

In both problems the class of admissible controls formally are Lebesgue measurable functions that take values in the control set $[-u_1,u_2]$ almost everywhere. However, as we shall see, optimal controls are piecewise continuous, in fact bang-bang. The optimal control found for problem \ref{problem1} is also optimal for problem \ref{problem}, with the addition of instantaneous jumps at the initial and final points, so that the boundary conditions $u(0)=1$ and $u(T)=1/\gamma^4$ are satisfied. Note that in connection with Fig. \ref{fig:frequency}, a natural way to think about these conditions is that $u(t)=1$ for $t\leq 0$ and $u(t)=1/\gamma^4$ for $t\geq T$; in the interval $(0,T)$ we pick the control that achieves the desired transfer in minimum time.

\section{Optimal Solution}


The system described by (\ref{system1}), (\ref{system2}) can be expressed
in compact form as
\begin{equation}
\dot{x}=f(x)+ug(x), \label{affine}%
\end{equation}
where the vector fields are given by
\begin{equation}
f=\left(
\begin{array}
[c]{c}%
x_{2}\\
1/x_{1}^{3}%
\end{array}
\right)  ,\,\,g=\left(
\begin{array}
[c]{c}%
0\\
-x_{1}%
\end{array}
\right)
\end{equation}
and $x\in\mathcal{D}=\{(x_{1},x_{2})\in\mathbb{R}^{2}:x_{1}>0\}$ and $u\in
U=[-u_{1},u_{2}]$. Admissible controls are Lebesgue measurable functions that
take values in the control set $U$. Given an admissible control $u$ defined
over an interval $[0,T]$, the solution $x$ of the system (\ref{affine})
corresponding to the control $u$ is called the corresponding trajectory and we
call the pair $(x,u)$ a controlled trajectory. Note that the domain
$\mathcal{D}$\ is invariant in the sense that trajectories cannot leave
$\mathcal{D}$. Starting with any positive initial condition $x_{1}(0)>0$, and
using any admissible control $u$, as $x_{1}\rightarrow0^{+}$, the
\textquotedblleft repulsive force" $1/x_{1}^{3}$ leads to an increase in
$x_{1}$ that will keep $x_{1}$ positive (as long as the solutions exist).

For a constant $\lambda_{0}$ and a row vector $\lambda=(\lambda_{1}%
,\lambda_{2})\in\left(  \mathbb{R}^{2}\right)  ^{\ast}$ define the control
Hamiltonian as%
\[
H=H(\lambda_{0},\lambda,x,u)=\lambda_{0}+\langle\lambda,f(x)+ug(x)\rangle.
\]
Then the conditions of the Pontryagin Maximum Principle \cite{Pontryagin}
provide the following necessary conditions for optimality:

\begin{theorem}
[\textrm{Maximum principle for control affine time-optimal problems}%
]\cite{Pontryagin} \label{prop:max_principle} Let $(x_{\ast}(t),u_{\ast}(t))$
be a time-optimal controlled trajectory that transfers the initial condition
$x(0)=x_{0}$ into the terminal state $x(T)=x_{T}$. Then it is a necessary
condition for optimality that there exists a constant $\lambda_{0}\leq0$ and
nonzero, absolutely continuous row vector function $\lambda(t)$ such that:

\begin{enumerate}
\item $\lambda$ satisfies the so-called adjoint equation%
\[
\dot{\lambda}(t)=-\frac{\partial H}{\partial x}(\lambda_{0},\lambda
(t),x_{\ast}(t),u_{\ast}(t))=-\left\langle \lambda(t),Df(x_{\ast}(t))+u_{\ast
}(t)Dg(x_{\ast}(t))\right\rangle
\]

\item For $0\leq t\leq T$ the function $u\mapsto H(\lambda_{0}%
,\lambda(t),x_{\ast}(t),u)$ attains its maximum\ over the control set $U$ at
$u=u_{\ast}(t)$.

\item $H(\lambda_{0},\lambda(t),x_{\ast}(t),u_{\ast}(t))\equiv0$.
\end{enumerate}
\end{theorem}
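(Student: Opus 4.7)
The plan is to prove the Maximum Principle by the classical needle-variation method, reducing the necessary conditions to a convex separation statement for the reachable set of an augmented system. I would first adjoin a zeroth coordinate $x_0(t)=t$ with $\dot{x}_0=1$, so that time-optimality of $(x_*,u_*)$ becomes the statement that the endpoint $(T,x_*(T))$ lies on the lower boundary of the set reachable from $(0,x_0)$, intersected with the vertical fiber through the prescribed target $x_T$. The goal is then to produce a nonzero row covector $(\lambda_0,\lambda(T))$ with $\lambda_0\le 0$ that separates a suitable approximating cone to this reachable set from the open half-line of strictly earlier arrival times.

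The central construction is the Pontryagin approximating cone $\mathcal{K}$ at $(T,x_*(T))$. At any Lebesgue point $\tau\in(0,T)$ of $u_*$ and any admissible $v\in U$, the needle variation that overrides $u_*$ by $v$ on $[\tau-\varepsilon,\tau]$ produces, to first order in $\varepsilon$, an endpoint displacement
\[
\Phi(T,\tau)\bigl[(f+vg)(x_*(\tau))-(f+u_*(\tau)g)(x_*(\tau))\bigr],
\]
where $\Phi(\cdot,\tau)$ is the fundamental matrix of the linearization $\dot{y}=\bigl(Df(x_*)+u_*Dg(x_*)\bigr)y$ of the reference flow. Adjoining the time-translation vectors $\pm(f+u_*g)(x_*(T))$ and taking the closed convex cone generated by all such first-order perturbations yields $\mathcal{K}$. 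The key topological step is to show that if $\mathcal{K}$ were to contain an interior direction pointing toward strictly earlier arrival at $x_T$, then a suitable combination of finitely many simultaneous needle variations, controlled by a Brouwer fixed-point argument, would produce a genuine admissible trajectory reaching $x_T$ in less than $T$, contradicting optimality. Hence $\mathcal{K}$ lies in a closed half-space, and a supporting covector $(\lambda_0,\lambda(T))$ with $\lambda_0\le 0$ exists by Hahn--Banach separation.

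Given the separator, defining $\lambda(t)$ for $t<T$ by backward transport along the adjoint of the linearization immediately yields the adjoint equation of item~1. Applying the separation inequality to the generic needle-variation vector at a Lebesgue point $\tau$ gives
\[
\langle\lambda(\tau),(f+vg)(x_*(\tau))\rangle\le\langle\lambda(\tau),(f+u_*(\tau)g)(x_*(\tau))\rangle
\]
for every $v\in U$, which is the pointwise maximum condition in item~2. Testing against the two time-translation vectors forces $H(\lambda_0,\lambda(T),x_*(T),u_*(T))=0$, and a direct computation using the adjoint equation together with the maximum condition shows that $\dot H=0$ almost everywhere, extending the identity $H\equiv 0$ to the whole interval and giving item~3. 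The hardest piece of this program is the open-mapping step: converting the first-order tangent information coming from finitely many joint needle variations into a genuine admissible trajectory that arrives earlier than $(x_*,u_*)$ requires a careful fixed-point argument in the space of variation parameters, whereas the derivations of the three listed conditions from an established separator are essentially routine manipulations.
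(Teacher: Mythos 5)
The paper does not prove this theorem at all: it is quoted as a classical result with a citation to Pontryagin et al., so there is no in-paper argument to compare against. Your outline is the standard proof from that source (and from Lee--Markus or Liberzon): augmentation by $x_0=t$, needle variations at Lebesgue points, the Pontryagin approximating cone, separation by Hahn--Banach, and backward transport of the covector to obtain the adjoint equation, the pointwise maximum condition, and $H\equiv 0$ from the time-translation directions. The outline is correct as far as it goes, but be aware that the two genuinely nontrivial steps are only named, not executed: the Brouwer fixed-point (open-mapping) argument converting first-order cone information into an actual earlier-arriving admissible trajectory, and the proof that $t\mapsto \max_{u\in U}H(\lambda_0,\lambda(t),x_*(t),u)$ is absolutely continuous with vanishing derivative a.e.\ (one cannot simply differentiate $H$ along $u_*(t)$, which is merely measurable). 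As a proof sketch of the cited classical theorem it is the right route; as a complete proof it would need those two lemmas filled in.
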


We call a controlled trajectory $(x,u)$ for which there exist multipliers
$\lambda_{0}$ and $\lambda(t)$ such that these conditions are satisfied an
extremal. Extremals for which $\lambda_{0}=0$ are called abnormal. If
$\lambda_{0}<0$, then without loss of generality we may rescale the $\lambda
$'s and set $\lambda_{0}=-1$. Such an extremal is called normal. Abnormal
extremals typically correspond to some degeneracies in the structure of the
optimal solution (often the value function is no longer differentiable along
these paths), but they cannot be excluded a priori for time-optimal control
problems. For example, the solution to the time-optimal control problem to the
origin for the harmonic oscillator, a simple text book example, is largely
characterised by two optimal abnormal controlled trajectories.

For the system (\ref{system1}), (\ref{system2}) we have
\begin{equation}
H(\lambda_{0},\lambda,x,u)=\lambda_{0}+\lambda_{1}x_{2}+\lambda_{2}\left(  \frac{1}%
{x_{1}^{3}}-x_{1}u\right)  ,\label{hamiltonian}%
\end{equation}
and thus
\begin{equation}
\dot{\lambda}=-\lambda\left[  \left(
\begin{array}
[c]{cc}%
0 & 1\\
-\frac{3}{x_{1}^{4}} & 0
\end{array}
\right)  +u\left(
\begin{array}
[c]{cc}%
0 & 0\\
-1 & 0
\end{array}
\right)  \right]  =-\lambda\left(
\begin{array}
[c]{cc}%
0 & 1\\
-(u+3/x_{1}^{4}) & 0
\end{array}
\right)  =-\lambda A\label{adjoint}%
\end{equation}

Observe that $H$ is a linear function of the bounded control variable $u$. The
coefficient at $u$ in $H$ is $-\lambda_{2}x_{1}$ and, since $x_{1}>0$, its
sign is determined by $\Phi=-\lambda_{2}$, the so-called \emph{switching
function}. According to the maximum principle, point 2 above, the optimal
control is given by $u=-u_{1}$ if $\Phi<0$ and by $u=u_{2}$ if $\Phi>0$. The
maximum principle provides a priori no information about the control at times
$t$ when the switching function $\Phi$ vanishes. However, if $\Phi(t)=0$ and
$\dot{\Phi}(t)\neq0$, then at time $t$ the control switches between its
boundary values and we call this a bang-bang switch. If $\Phi$ were to vanish
identically over some open time interval $I$ the corresponding control is
called \emph{singular}.

\begin{proposition}
For Problem \ref{problem1} optimal controls are bang-bang.
\end{proposition}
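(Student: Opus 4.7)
My plan is to rule out singular arcs by using the adjoint equation together with the nontriviality of the multipliers and the Hamiltonian identity, after which ``bang-bang'' follows from the sign rule in point~2 of the maximum principle.

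Suppose, for the sake of contradiction, that the switching function $\Phi=-\lambda_2$ vanishes identically on some open interval $I\subset[0,T]$. Then $\lambda_2\equiv 0$ on $I$, and differentiating along the extremal the second row of the adjoint equation~(\ref{adjoint}) gives $\dot\lambda_2=-\lambda_1$. Since $\lambda_2$ is absolutely continuous and constantly zero on $I$, we conclude $\lambda_1\equiv 0$ on $I$ as well.

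Next I feed this information into the Hamiltonian identity $H\equiv 0$ from point~3 of Theorem~\ref{prop:max_principle}. From~(\ref{hamiltonian}), with $\lambda_1=\lambda_2=0$ on $I$, we get
\begin{equation*}
0 \;=\; H(\lambda_0,\lambda(t),x_\ast(t),u_\ast(t)) \;=\; \lambda_0 + \lambda_1 x_2 + \lambda_2\!\left(\tfrac{1}{x_1^3}-x_1 u\right) \;=\; \lambda_0
\end{equation*}
for every $t\in I$. Hence $\lambda_0=0$, and combined with $\lambda_1(t)=\lambda_2(t)=0$ on $I$ this forces the triple $(\lambda_0,\lambda_1(t),\lambda_2(t))$ to vanish at some $t\in I$. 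But $\lambda$ solves the linear homogeneous ODE $\dot\lambda=-\lambda A$, so $\lambda(t_0)=0$ at one time implies $\lambda\equiv 0$ on $[0,T]$. Together with $\lambda_0=0$ this contradicts the nontriviality assertion of the maximum principle.

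Therefore $\Phi$ cannot vanish on any open subinterval of $[0,T]$; equivalently, no singular arc exists. On each interval where $\Phi$ has a definite sign, point~2 of Theorem~\ref{prop:max_principle} applied to the $u$-linear Hamiltonian forces $u_\ast=u_2$ when $\Phi>0$ and $u_\ast=-u_1$ when $\Phi<0$. Thus optimal controls take values only in $\{-u_1,u_2\}$, i.e.\ they are bang-bang. The main obstacle in this argument is essentially bookkeeping: one must be sure that $\lambda_1\equiv 0$ follows from $\lambda_2\equiv 0$ (so that $\lambda_0=0$ is really unavoidable), and that the nontriviality condition is then genuinely violated; both steps are immediate from the linear structure of~(\ref{adjoint}) once the Hamiltonian identity is invoked.
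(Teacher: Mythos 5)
Your argument is built from the right ingredients --- the adjoint relation $\dot{\lambda}_2=-\lambda_1$ from~(\ref{adjoint}), the nontriviality of the multiplier, and (optionally) $H\equiv 0$ --- and it does correctly rule out singular arcs: if $\Phi=-\lambda_2$ vanished on an open interval, then $\lambda_1$ would vanish there too, forcing $\lambda\equiv 0$ by linearity of the adjoint equation (your extra step deducing $\lambda_0=0$ from $H\equiv0$ is harmless but not needed, since Theorem~\ref{prop:max_principle} as stated already requires the vector $\lambda(t)$ itself to be nonvanishing).

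However, excluding singular arcs is not quite the whole statement. After your argument, all you know is that the zero set $Z=\{t:\Phi(t)=0\}$ has empty interior; a priori $Z$ could still be an infinite (even uncountable, or positive-measure) nowhere dense set, on which point~2 of the maximum principle determines nothing about $u_*$, and accumulating switching times (chattering) are not excluded. The paper's proof is pointwise and closes exactly this gap in one line: at \emph{any} time $t$ with $\Phi(t)=0$ one has $\dot{\Phi}(t)=\lambda_1(t)\neq 0$ by nontriviality, so every zero of $\Phi$ is a transversal sign change; hence the zeros are isolated, there are finitely many of them on $[0,T]$, and the control genuinely alternates between $-u_1$ and $u_2$. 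To repair your version you would need to add this observation (e.g., at an accumulation point of $Z$ both $\Phi$ and $\dot{\Phi}$ vanish, again killing $\lambda$), at which point you have reproduced the paper's argument.
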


\begin{proof}
Whenever the switching function $\Phi(t)=-\lambda_{2}(t)$ vanishes at some
time $t$, then it follows from the non-triviality of the multiplier
$\lambda(t)$ that its derivative $\dot{\Phi}(t)=-\dot{\lambda}_{2}%
(t)=\lambda_{1}(t)$ is non-zero. Hence the switching function changes sign and
there is a bang-bang switch at time $t$.
\end{proof}

Thus optimal controls alternate between the boundary values $u=-u_{1}$ and
$u=u_{2}$ of the control set and we shall see below that the number of
switchings remains bounded on compact subsets of the domain $\mathcal{D}$.
Chattering controls that would have infinitely many switchings on a finite
interval are not possible.

\begin{definition}
We denote the vector fields corresponding to the constant bang controls
$-u_{1}$ and $u_{2}$ by $X=f-u_{1}g$ and $Y=f+u_{2}g$, respectively, and call
the trajectories corresponding to the constant controls $u\equiv-u_{1}$ and
$u\equiv u_{2}$ $X$- and $Y$-trajectories. A concatenation of an
$X$-trajectory followed by a $Y$-trajectory is denoted by $XY$ while the
concatenation in the inverse order is denoted by $YX$.
\end{definition}

In this paper we establish the precise concatenation sequences for optimal
controls and in particular calculate the times between switchings explicitly.

\begin{proposition}
All the extremals are normal.
\end{proposition}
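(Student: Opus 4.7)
The plan is to argue by contradiction: assume an abnormal extremal exists, so $\lambda_{0}=0$, and then exploit the identity $H\equiv 0$ together with the bang-bang structure and the boundary conditions of Problem~\ref{problem1} to derive a contradiction.

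First I would write $H\equiv 0$ with $\lambda_{0}=0$ in the form
\[
\lambda_{1}(t)\,x_{2}(t)+\lambda_{2}(t)\!\left(\frac{1}{x_{1}^{3}(t)}-u(t)\,x_{1}(t)\right)\equiv 0.
\]
At any bang-bang switching time $t^{*}$ the switching function $\Phi=-\lambda_{2}$ vanishes, and from the proof of the previous proposition we have $\dot\Phi(t^{*})=\lambda_{1}(t^{*})\neq 0$ by nontriviality of $\lambda$. Substituting $\lambda_{2}(t^{*})=0$ into the identity above then forces $x_{2}(t^{*})=0$. Combined with the boundary conditions $x_{2}(0)=x_{2}(T)=0$, this shows that for an abnormal extremal $x_{2}$ must vanish at every switching time as well as at both endpoints.

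Next I would examine a single bang arc under this constraint. On any $X$-arc we have $\dot x_{2}=u_{1}x_{1}+1/x_{1}^{3}>0$ on $\mathcal{D}$, since $u_{1}\geq 1$ and $x_{1}>0$, so $x_{2}$ is strictly increasing along $X$-arcs and cannot return to zero once it has left. Hence an abnormal extremal can contain no nontrivial $X$-arc. Since bang-bang controls must alternate between $X$ and $Y$, the extremal must then consist of a single $Y$-arc starting at $(1,0)$.

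The final step is to rule out this single $Y$-arc by a conserved-quantity argument. A direct computation shows that $E=x_{2}^{2}+u_{2}x_{1}^{2}+1/x_{1}^{2}$ is an integral of motion along $Y$-trajectories, and starting from $(1,0)$ gives $E=u_{2}+1$. Intersecting this level set with $\{x_{2}=0\}$ yields $u_{2}x_{1}^{4}-(u_{2}+1)x_{1}^{2}+1=0$, which factors as $(u_{2}x_{1}^{2}-1)(x_{1}^{2}-1)=0$; the only points on the orbit with $x_{2}=0$ are therefore $x_{1}=1$ and $x_{1}=1/\sqrt{u_{2}}\leq 1$. Since $\gamma>1$, the target $(\gamma,0)$ is not on the orbit, contradicting the terminal condition. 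The only subtle point I anticipate is the single key implication ``$\lambda_{0}=0$ forces $x_{2}=0$ at every switching time''; once that is established, the rest reduces to elementary phase-plane analysis, and the degenerate subcase $u_{2}=1$ (where the $Y$-orbit through $(1,0)$ collapses to the equilibrium point) is handled uniformly by the factorization above.
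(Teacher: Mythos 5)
Your proof is correct and rests on the same key observation as the paper's: $H\equiv 0$ together with $\lambda_{0}=0$ and the nontriviality of $\lambda$ forces every switching of an abnormal extremal onto the $x_{1}$-axis, after which strict monotonicity of $x_{2}$ along $X$-arcs and the geometry of the $Y$-orbit through $(1,0)$ rule out every candidate. The only difference is organizational: the paper argues by cases on the initial control and follows the trajectory through its (impossible) sequence of switchings, whereas you eliminate all $X$-arcs at once and finish with the explicit first integral $x_{2}^{2}+u_{2}x_{1}^{2}+1/x_{1}^{2}$, which is a slightly cleaner packaging of the same argument and handles the degenerate case $u_{2}=1$ uniformly.
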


\begin{proof}
If $(x,u)$ is an abnormal extremal trajectory that has a switching at time
$t$, then, since $\lambda_{2}(t)=0$, it follows from $H=0$ that we must have
$x_{2}(t)=0$. The starting point is $(1,0)$ and suppose that $u=-u_1$ initially.
From (\ref{system2}) it is $\dot{x}_{2}>0$ so $x_2>0$ and a switching at a point with $x_{2}(t)>0$, not allowed for an abnormal extremal, is
necessary in order to reach the target point $(\gamma,0)$.
If $u=u_2$ initially,
then $\dot{x}_{2}(0)=1-u_{2}<0$ and $x_2<0$ for some time interval. During this time it is
$\dot{x}_1<0$ and consequently $x_1<1<\gamma$. A switching is necessary,
which takes place on the $x_1$-axis for an abnormal extremal. The control changes to
$u=-u_1$ and the situation is as before, where one more switching is necessary at a point with $x_{2}(t)>0$,
forbidden for abnormal extremals. Thus, there are no abnormal extremals in the optimal solutions.
\end{proof}

We henceforth only
consider normal trajectories and set $\lambda_{0}=-1$. For normal extremals,
$H=0$ then implies that for any switching time $t$ we must have $\lambda
_{1}(t)x_{2}(t)=1$. For an $XY$ junction we have $\dot{\Phi}%
(t)=\lambda_{1}(t)>0$ and thus necessarily $x_{2}(t)>0$ and analogously
optimal $YX$ junctions need to lie in $\{x_{2}<0\}$. We now develop the
precise structure of the switchings in a series of Lemmas. We start with
computing the evolution of the state $x_{1}(t)$ along an $X$- or $Y$-trajectory.

\begin{lemma}
[\textrm{Time evolution of $x_{1}$}]\label{prop:x1} The time evolution of
$x_{1}$ along an $X$-trajectory in the upper quadrant starting from
$(\alpha,0)$ is
\begin{equation}
\label{x1_in_X}x_{1}(t)=\sqrt{\frac{1}{2}\left(  \alpha^{2}-\frac{1}%
{u_{1}\alpha^{2}}\right)  +\frac{1}{2}\left(  \alpha^{2}+\frac{1}{u_{1}%
\alpha^{2}}\right)  \cosh(2\sqrt{u_{1}}t)},
\end{equation}
while the corresponding evolution along a $Y$-trajectory in the lower quadrant
starting from $(\beta,0)$ is
\begin{equation}
\label{x1_in_Y}x_{1}(t)=\sqrt{\frac{1}{2}\left(  \beta^{2}+\frac{1}{u_{2}%
\beta^{2}}\right)  +\frac{1}{2}\left(  \beta^{2}-\frac{1}{u_{2}\beta^{2}%
}\right)  \cos(2\sqrt{u_{2}}t)},
\end{equation}

\end{lemma}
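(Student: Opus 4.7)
The plan is to exploit the fact that along a bang arc the control $u$ is constant, so the system (\ref{system1})--(\ref{system2}) collapses to the single second order equation $\ddot{x}_1+ux_1=1/x_1^3$. This is a conservative Newtonian equation with potential $V(x_1)=\tfrac{u}{2}x_1^2+\tfrac{1}{2x_1^2}$, so the energy $E=\tfrac{1}{2}\dot{x}_1^2+\tfrac{u}{2}x_1^2+\tfrac{1}{2x_1^2}$ is a first integral. I will use this conservation law to linearize the problem via the substitution $y=x_1^2$.

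First I will compute $\ddot{y}=2\dot{x}_1^2+2x_1\ddot{x}_1=2\dot{x}_1^2-2ux_1^2+2/x_1^2$ directly from the equation of motion. Eliminating $\dot{x}_1^2$ through the energy integral, $\dot{x}_1^2=2E-ux_1^2-1/x_1^2$, the singular $1/x_1^2$ terms cancel and I obtain the linear driven oscillator equation $\ddot{y}+4uy=4E$ for $y=x_1^2$. This is the key reduction; everything downstream is constant-coefficient ODE theory.

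Next I will specialize to the two bang arcs. For an $X$-arc the control is $u=-u_1<0$, so the general solution reads $y(t)=-E/u_1+C_1\cosh(2\sqrt{u_1}\,t)+C_2\sinh(2\sqrt{u_1}\,t)$. The initial data $(x_1,x_2)(0)=(\alpha,0)$ fix $E_X=-u_1\alpha^2/2+1/(2\alpha^2)$ and, via $\dot{y}(0)=2x_1(0)x_2(0)=0$ together with $y(0)=\alpha^2$, force $C_2=0$ and $C_1=\tfrac{1}{2}(\alpha^2+1/(u_1\alpha^2))$; collecting terms yields (\ref{x1_in_X}) after taking the positive square root, which is legitimate because $\mathcal{D}$ is invariant and $x_1>0$ throughout. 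For a $Y$-arc, $u=u_2>0$ turns the linear ODE into a genuine harmonic oscillator with general solution $y(t)=E/u_2+C_1\cos(2\sqrt{u_2}\,t)+C_2\sin(2\sqrt{u_2}\,t)$, and the identical matching procedure with initial point $(\beta,0)$ produces (\ref{x1_in_Y}).

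The only step that demands any attention is the algebraic cancellation of the $1/x_1^2$ terms that reduces the nonlinear scalar equation for $x_1$ to the linear equation $\ddot{y}+4uy=4E$ for $y=x_1^2$; once that observation is in hand I foresee no real obstacle, and the remainder of the argument is bookkeeping of initial conditions.
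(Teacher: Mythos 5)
Your proof is correct, and it takes a genuinely different route from the paper's. Both arguments start from the same first integral (your energy $2E$ is exactly the constant $c$ in the paper's relations $x_2^2-u_1x_1^2+1/x_1^2=c$ and $x_2^2+u_2x_1^2+1/x_1^2=c$), but they diverge afterwards: the paper solves for $x_2$ as a function of $x_1$, obtains the separable first-order ODE $\dot{x}_1=\pm\sqrt{\mp u\,x_1^4+cx_1^2-1}/x_1$, and integrates it after an ad hoc affine change of variables in $x_1^2$, which requires tracking the sign of $x_2$ (hence the explicit restriction to the upper or lower quadrant and the argument that $\dot x_2>0$ forces $x_2>0$ on the $X$-arc). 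You instead observe that $y=x_1^2$ satisfies the driven linear oscillator $\ddot y+4uy=4E$ once the energy integral is used to eliminate $\dot x_1^2$ --- the standard linearization of the Pinney/Ermakov equation --- and then everything is constant-coefficient ODE theory with the initial data $y(0)=\alpha^2$ (resp.\ $\beta^2$), $\dot y(0)=2x_1(0)x_2(0)=0$. I checked the bookkeeping: $y_p=-E/u_1=\tfrac12(\alpha^2-1/(u_1\alpha^2))$ and $C_1=\tfrac12(\alpha^2+1/(u_1\alpha^2))$ for the $X$-arc, and the analogous constants for the $Y$-arc, all match \eqref{x1_in_X} and \eqref{x1_in_Y}. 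Your route is arguably cleaner: it avoids the sign analysis entirely (the formulas emerge as even functions of $t$, valid on the whole arc, with the quadrant hypotheses only locating where the trajectory happens to be), and it makes the $\cosh$/$\cos$ structure transparent rather than an outcome of integration. What it costs is only the small observation that the singular $1/x_1^2$ terms cancel in $\ddot y$, which you correctly flag as the one step needing care; the positivity $x_1>0$ on the invariant domain $\mathcal{D}$ justifies the final square root, as you note.
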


\begin{proof}
A first integral of the motion along the $X$-trajectory is
\begin{equation}
x_{2}^{2}-u_{1}x_{1}^{2}+\frac{1}{x_{1}^{2}}=c,\label{first_integral_1}%
\end{equation}
where $c=-u_{1}\alpha^{2}+1/\alpha^{2}$. From (\ref{system2}) we observe that
$\dot{x}_{2}$ is positive for $u=-u_{1}$ and since $x_{2}(0)=0$ it follows that
$x_{2}(t)$ itself is positive. Hence
\begin{equation}
x_{2}=\frac{\sqrt{u_{1}x_{1}^{4}+cx_{1}^{2}-1}}{x_{1}}\nonumber
\end{equation}
and (\ref{system1}) gives
\begin{equation}
\dot{x}_{1}=\frac{\sqrt{u_{1}x_{1}^{4}+cx_{1}^{2}-1}}{x_{1}}.\nonumber
\end{equation}
Making a change of variables according to
\begin{equation}
y=\frac{2u_{1}x_{1}^{2}+c}{\sqrt{c^{2}+4u_{1}}},\label{change_1}%
\end{equation}
the previous equation becomes
\begin{equation}
\frac{dy}{\sqrt{y^{2}-1}}=2\sqrt{u_{1}}dt.\nonumber
\end{equation}
Integrating and using $y(0)=1$ we obtain that
\begin{equation}
\ln(y+\sqrt{y^{2}-1})=2\sqrt{u_{1}}t.\nonumber
\end{equation}
From this and (\ref{change_1}), equation (\ref{x1_in_X}) easily follows.

Similarly, a first integral of the motion along the $Y$-trajectory is given
by
\begin{equation}
x_{2}^{2}+u_{2}x_{1}^{2}+\frac{1}{x_{1}^{2}}=c,\label{first_integral_2}%
\end{equation}
where now $c=u_{2}\beta^{2}+1/\beta^{2}$. We are interested in the part of the
trajectory in the lower quadrant, $x_{2}<0$, and thus
\begin{equation}
x_{2}=-\frac{\sqrt{-u_{2}x_{1}^{4}+cx_{1}^{2}-1}}{x_{1}}\nonumber
\end{equation}
and
\begin{equation}
\dot{x}_{1}=-\frac{\sqrt{-u_{2}x_{1}^{4}+cx_{1}^{2}-1}}{x_{1}}.\nonumber
\end{equation}
If we now make the change of variables
\begin{equation}
y=\frac{2u_{2}x_{1}^{2}-c}{\sqrt{c^{2}-4u_{2}}},\label{change_2}%
\end{equation}
we obtain
\begin{equation}
\frac{dy}{\sqrt{1-y^{2}}}=-2\sqrt{u_{1}}dt.\nonumber
\end{equation}
Integrating this and using $y(0)=1$ we find that
\begin{equation}
y=\cos(2\sqrt{u_{2}}t).\nonumber
\end{equation}
From this and (\ref{change_2}) we can easily derive (\ref{x1_in_Y}). Note that
in the calculation of $y(0)$ we used that for evolution in the lower quadrant
it necessarily holds that $\dot{x}_{2}(0)<0\Rightarrow u_{2}\beta^{2}%
>1/\beta^{2}$, so $\sqrt{c^{2}-4u_{2}}=u_{2}\beta^{2}-1/\beta^{2}$
\end{proof}

The times between consecutive switchings along optimal controls are determined
by specific relations that we now derive.

\begin{lemma}
[\textrm{Inter-switching time}]\label{switch_time} Let $p=(x_{1},x_{2})$ be a
switching point and $\tau$ denote the time to reach the next switching point
$q$. If $\overrightarrow{pq}$ is a $Y$-trajectory, then
\begin{equation}
\sin(2\sqrt{u_{2}}\tau)=-\frac{2\sqrt{u_{2}}x_{1}x_{2}}{x_{2}^{2}+u_{2}%
x_{1}^{2}},\quad\cos(2\sqrt{u_{2}}\tau)=\frac{x_{2}^{2}-u_{2}x_{1}^{2}}%
{x_{2}^{2}+u_{2}x_{1}^{2}}\label{par_2}%
\end{equation}
while, if $\overrightarrow{pq}$ is an $X$-trajectory, then
\begin{equation}
\sinh(2\sqrt{u_{1}}\tau)=-\frac{2\sqrt{u_{1}}x_{1}x_{2}}{x_{2}^{2}-u_{1}%
x_{1}^{2}},\quad\cosh(2\sqrt{u_{1}}\tau)=\frac{x_{2}^{2}+u_{1}x_{1}^{2}}%
{x_{2}^{2}-u_{1}x_{1}^{2}}.\label{par_1}%
\end{equation}

\end{lemma}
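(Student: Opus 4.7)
The plan is to reduce the adjoint equation along each bang arc to an autonomous linear oscillator in a rescaled time, and then convert the resulting simple zero-crossing condition back to physical time via the explicit formulas of Lemma \ref{prop:x1}.

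On a $Y$-arc, eliminating $\lambda_1$ from the pair $\dot\lambda_2=-\lambda_1$ and $\dot\lambda_1=\lambda_2(u_2+3/x_1^4)$ shows that $\lambda_2$ satisfies the Hill-type equation $\ddot\lambda_2+(u_2+3/x_1^4)\lambda_2=0$; analogously for the $X$-arc with $u_2$ replaced by $-u_1$. Introducing the Ermakov-style rescaled time $d\tau/dt=1/x_1^2$ together with the substitution $\lambda_2(t)=x_1(t)\phi(\tau(t))$, a direct calculation that exploits $\ddot x_1=1/x_1^3-u_2 x_1$ on a $Y$-arc (respectively $\ddot x_1=1/x_1^3+u_1 x_1$ on an $X$-arc) produces a remarkable cancellation of the control-dependent terms and reduces the Hill equation in both cases to the same autonomous oscillator $\phi''+4\phi=0$. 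Since $\lambda_2(t_p)=0$, the solution must be $\lambda_2(t)=C\,x_1(t)\sin\bigl(2[\tau(t)-\tau(t_p)]\bigr)$, so by positivity of $x_1$ the next zero of $\lambda_2$ occurs precisely when $\tau(t_q)-\tau(t_p)=\pi/2$.

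To convert this rescaled-time increment back into the physical time $\tau=t_q-t_p$, I will use the representation $x_1^2(t)=A+B\cos\bigl(2\sqrt{u_2}(t-t^*)\bigr)$ coming from Lemma \ref{prop:x1} (with $t^*$ the passage through the turning point, and $A^2-B^2=1/u_2$ an identity that follows from the first integral (\ref{first_integral_2})). The Weierstrass substitution $v=\tan(\sqrt{u_2}(t-t^*))$ then evaluates $\int dt/x_1^2$ in closed form and gives $\tan\tau(t)=v/(\sqrt{u_2}\beta^2)$. Applying the tangent-subtraction formula to the pair $(t_p,t_q)$ under the condition $\tau_q-\tau_p=\pi/2$, and eliminating the turning-point parameter $\beta$ via the first integral evaluated at $p$, collapses everything to $\tan(\sqrt{u_2}\tau)=-\sqrt{u_2}x_1/x_2$, from which the double-angle identities yield (\ref{par_2}). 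The $X$-arc case proceeds in parallel: $x_1^2(t)=A'+B'\cosh\bigl(2\sqrt{u_1}(t-t^*)\bigr)$ with $B'^2-A'^2=1/u_1$ from (\ref{first_integral_1}), the analogous substitution produces $\tan\tau(t)=\tanh(\sqrt{u_1}(t-t^*))/(\sqrt{u_1}\alpha^2)$, and the identical algebraic collapse yields $\tanh(\sqrt{u_1}\tau)=-\sqrt{u_1}x_1/x_2$ and hence (\ref{par_1}). I expect the delicate step to be verifying that the turning-point parameter $\beta$ (respectively $\alpha$) genuinely cancels out in this final manipulation, so that the resulting expression for $\tau$ depends only on the switching point $p$ as asserted.
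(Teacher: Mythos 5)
Your argument is correct, but it is genuinely different from the paper's. The paper follows Sussmann's conjugate-point formalism: at switchings $\lambda$ annihilates $g$, so the condition for the next switching is $g(p)\parallel(e^{-\tau Y})_{\ast}g(e^{\tau Y}(p))=e^{\tau\,\mathrm{ad}Y}(g)(p)$, and the exponential is summed in closed form using the finite-dimensional Lie algebra relations $[f,[f,g]]=2f$, $[g,[f,g]]=-2g$; this yields $\sqrt{u_2}x_1\sin(2\sqrt{u_2}\tau)+x_2[1-\cos(2\sqrt{u_2}\tau)]=0$ directly, with no need for the explicit time-parametrization of the arcs. You instead exploit the Ermakov structure: since $x_1$ itself satisfies $\ddot x_1+ux_1=1/x_1^3$, the Pinney-type substitution $\lambda_2=x_1\phi$, $d\tau/dt=1/x_1^2$ does collapse $\ddot\lambda_2+(u+3/x_1^4)\lambda_2=0$ to $\phi''+4\phi=0$ (I checked: $\ddot\lambda_2+(u+3/x_1^4)\lambda_2=(\phi''+4\phi)/x_1^3$), giving the clean invariant statement that consecutive switchings are separated by exactly $\pi/2$ in the rescaled time $\int dt/x_1^2$ --- pleasingly, the same rescaling the paper uses in Proposition 2.1 for the Schr\"odinger equation. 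The ``delicate step'' you flag does go through: with $A\pm B=\beta^2,\,1/(u_2\beta^2)$ one gets $u_2\beta^4+v_p^2=u_2\beta^2x_1^2(1+v_p^2)$ and $v_p(1-u_2\beta^4)=\sqrt{u_2}\beta^2x_1x_2(1+v_p^2)$, so $\beta$ cancels and $\tan(\sqrt{u_2}\tau)=-\sqrt{u_2}x_1/x_2$; the double-angle identities then reproduce (\ref{par_2}), and the hyperbolic case is parallel. Two small points to make explicit: the constant $C$ in $\phi=C\sin(2(\tau-\tau_p))$ is nonzero by nontriviality of $\lambda$ (else $\lambda_1=-\dot\lambda_2\equiv0$ too); and on an $X$-arc the condition $|w_pw_q|=u_1\alpha^4<1$ is exactly the condition $x_2^2>u_1x_1^2$ under which a next switching exists at all (when $u_1\alpha^4\geq1$ the total rescaled duration $2\arctan(1/(\sqrt{u_1}\alpha^2))$ of the arc is at most $\pi/2$ and $\lambda_2$ never vanishes again) --- the same implicit restriction is present in the paper's formula (\ref{par_1}), which requires $\cosh(2\sqrt{u_1}\tau)>0$. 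What the paper's route buys is coordinate-free generality; what yours buys is an elementary ODE argument and the uniform ``$\pi/2$ in rescaled time'' characterization of the switching locus, plus the sharper half-angle form $\tan(\sqrt{u_2}\tau)=-\sqrt{u_2}x_1/x_2$ which only needs data at $p$.
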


Note that the inter-switching times depend only on the ratio $x_2/x_1$.

\begin{proof}
These formulas are obtained as an application of the concept of a
\textquotedblleft conjugate point\textquotedblright\ for bang-bang controls as
originally defined by Sussmann in \cite{Suplane} and \cite{Su1}. For
additional background on the synthesis of optimal controlled trajectories in
the plane, we also refer the reader to the monograph \cite{Boscain04} by
Boscain and Piccoli that gives a comprehensive introduction to the theory of
optimal control for $2$-dimensional systems. In an effort to make the paper
self-contained, we include Sussmann's argument.

Without loss of generality assume that the trajectory passes through $p$ at
time $0$ and is at $q$ at time $\tau$. Since $p$ and $q$ are switching points,
the corresponding multipliers vanish against the control vector field $g$ at
those points, i.e., $\langle\lambda(0),g(p)\rangle=\langle\lambda
(\tau),g(q)\rangle=0$. We need to compute what the relation $\langle
\lambda(\tau),g(q)\rangle=0$ implies at time $0$. In order to do so, we move
the vector $g(q)$ along the $Y$-trajectory backward from $q$ to $p$. This is
done by means of the solution $w(t)$ of the variational equation along the
$Y$-trajectory with terminal condition $w(\tau)=g(q)$ at time $\tau$. Recall
that the variational equation along $Y$ is the linear system $\dot{w}=Aw$
where $A$ is given in (\ref{adjoint}). Symbolically, if we denote by
$e^{tY}(p)$ the value of the $Y$-trajectory at time $t$ that starts at the
point $p$ at time $0$ and by $(e^{-tY})_{\ast}$ the backward evolution under
the linear differential equation $\dot{w}=Aw$, then we can represent this
solution in the form
\[
w(0)=(e^{-\tau Y})_{\ast}w(\tau)=(e^{-\tau Y})_{\ast}g(q)=(e^{-\tau Y})_{\ast}g(e^{\tau
Y}(p))=(e^{-\tau Y})_{\ast}\circ
g\circ e^{\tau Y}(p).
\]
Since the
\textquotedblleft adjoint equation\textquotedblright\ of the Maximum Principle
is precisely the adjoint equation to the variational equation, it follows that
the function $t\mapsto\langle\lambda(t),w(t)\rangle$ is constant along the
$Y$-trajectory. Hence $\langle\lambda(\tau),g(q)\rangle=0$ implies that
\[
\langle\lambda(0),w(0)\rangle=\langle\lambda(0),(e^{-\tau Y})_{\ast}g(e^{\tau
Y}(p))\rangle=0
\]
as well. But the non-zero multiplier $\lambda(0)$ can only be orthogonal to
both $g(p)$ and $w(0)$ if these vectors are parallel, $g(p)\Vert
w(0)=(e^{-\tau Y})_{\ast}g(e^{\tau Y}(p))$. It is this relation that defines
the switching time.

It remains to compute $w(0)$. For this we make use of the well-known relation \cite{Jurdjevic97}
\begin{equation}
(e^{-\tau Y})_{\ast}\circ g\circ e^{\tau Y}=e^{\tau\,adY}(g)\label{adj}%
\end{equation}
where the operator $adY$ is defined as $adY(g)=[Y,g]$, with $[,]$ denoting the
Lie bracket of the vector fields $Y$ and $g$. This representation is a
consequence of the fact that the derivative of the function $\chi
:t\mapsto(e^{-tY})_{\ast}g(e^{tY}(p))$ at $t=0$ is given by $[Y,g](p)$ and
iteratively the higher order derivatives of $\chi$ at $0$ are given by
$\chi^{(n)}(0)=ad^{n}Y(g)$ where, inductively, $ad^{n}%
Y(g)=[Y,ad^{n-1}Y(g)]$. For our system, the Lie algebra $\mathcal{L}$
generated by the fields $f$ and $g$ actually is finite dimensional: we have
\[
\lbrack f,g](x)=\left(
\begin{array}
[c]{c}%
x_{1}\\
-x_{2}%
\end{array}
\right)
\]
and the relations
\[
\lbrack f,[f,g]]=2f,\qquad\lbrack g,[f,g]]=-2g
\]
can be directly verified. Using these relations and the analyticity of the
system, $e^{t\,adY}(g)$ can be calculated in closed form from the expansion
\begin{equation}
e^{t\,adY}(g)=\sum_{n=0}^{\infty}\frac{t^{n}}{n!}\,ad\,^{n}Y(g).
\end{equation}
It is not hard to show that for $n=0,1,2,\ldots$, we have that
\[
ad\,^{2n+1}Y(g)=(-4u_{2})^{n}[f,g]
\]
and
\[
ad\,^{2n+2}Y(g)=2(-4u_{2})^{n}(f-u_{2}g),
\]
so that
\begin{equation}
e^{t\,adY}(g)=g+\sum_{n=0}^{\infty}\frac{t^{2n+1}}{(2n+1)!}\,(-4u_{2}%
)^{n}[f,g]+\sum_{n=0}^{\infty}\frac{2t^{2n+2}}{(2n+2)!}\,(-4u_{2})^{n}%
(f-u_{2}g).\nonumber
\end{equation}
By summing the series appropriately we obtain
\begin{equation}
e^{t\,adY}(g)=g+\frac{1}{2\sqrt{u_{2}}}\sin(2\sqrt{u_{2}}t)[f,g]+\frac
{1}{2u_{2}}[1-\cos(2\sqrt{u_{2}}t)](f-u_{2}g).\nonumber
\end{equation}
Hence the field $w(0)=(e^{-\tau Y})_{\ast}g(e^{\tau Y}(p))$ is
parallel to $g(p)=(0,-x_{1})^{T}$ if and only if
\begin{equation}
\sqrt{u_{2}}x_{1}\sin(2\sqrt{u_{2}}\tau)+x_{2}\left[  1-\cos(2\sqrt{u_{2}}%
\tau)\right]  =0.\nonumber
\end{equation}
Hence
\begin{equation}
\sin(2\sqrt{u_{2}}\tau)=-\frac{x_{2}}{\sqrt{u_{2}}x_{1}}[1-\cos(2\sqrt{u_{2}%
}\tau)]
\end{equation}
from which (\ref{par_2}) follows. Note that the solution $\cos(2\sqrt{u_{2}%
}\tau)=1$ is rejected because it corresponds to $\tau=0$ or $\tau=\pi/\sqrt{u_{2}}$,
the latter being the period of the closed trajectory.

In the case of an $X$-trajectory the corresponding inductive relations are
\[
ad\,^{2n+1}X(g)=(4u_{1})^{n}[f,g]
\]
and
\[
ad\,^{2n+2}X(g)=2(4u_{1})^{n}(f+u_{1}g)
\]
for $n=0,1,2,\ldots$, and
\begin{equation}
e^{t\,adX}(g)=g+\frac{1}{2\sqrt{u_{1}}}\sinh(2\sqrt{u_{1}}t)[f,g]+\frac
{1}{2u_{1}}[\cosh(2\sqrt{u_{1}}t)-1](f+u_{1}g).\nonumber
\end{equation}
For $t=\tau$ this field is parallel to $g$ at $p$ if and only if
\begin{equation}
\sqrt{u_{1}}x_{1}\sinh(2\sqrt{u_{1}}\tau)+x_{2}[\cosh(2\sqrt{u_{1}}\tau)-1]=0,\nonumber
\end{equation}
from which we find
\begin{equation}
\sinh(2\sqrt{u_{1}}\tau)=-\frac{x_{2}}{\sqrt{u_{1}}x_{1}}[\cosh(2\sqrt{u_{1}%
}\tau)-1].
\end{equation}
Using this relation we obtain (\ref{par_1}). The solution $\cosh(2\sqrt{u_{1}%
}\tau)=1$ corresponds to $\tau=0$ and is rejected.
\end{proof}

\begin{figure}[t]
\centering
\includegraphics[width=0.8\linewidth]{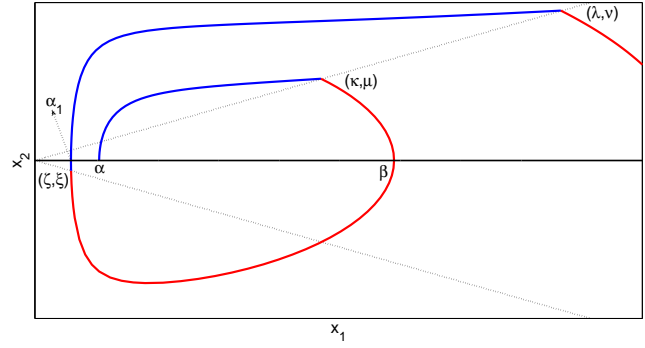}\caption{Consecutive switching points lie on two opposite-slope lines through the origin. Blue curves correspond to $X$-segments, red curves to $Y$-segments.}%
\label{fig:threeswitchings}%
\end{figure}

\begin{lemma}
[\textrm{Main technical point}]\label{prop:ratio} The ratio of the coordinates
of consecutive switching points has constant magnitude but alternating sign,
while these points are not symmetric with respect to the $x_{1}$-axis.
\end{lemma}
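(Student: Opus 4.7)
The plan is to show that along each bang arc joining two consecutive switching points the ratio $x_2/x_1$ is reversed in sign while preserving its magnitude. The key observation is that the substitution $z=x_1^2$ \emph{linearises} every bang arc: differentiating $\dot z = 2 x_1 x_2$ once and using the first integrals (\ref{first_integral_1})--(\ref{first_integral_2}), one finds
\begin{equation*}
\ddot z + 4 u_2 z = 2 c_Y \quad\text{on a $Y$-arc}, \qquad \ddot z - 4 u_1 z = 2 c_X \quad\text{on an $X$-arc},
\end{equation*}
where $c_X,c_Y$ are the associated constants of motion. Consequently $z$ has an explicit harmonic (resp.\ hyperbolic) representation in closed form, determined by the switching data $z(0)=x_1^2$, $\dot z(0)=2 x_1 x_2$.

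I would then evaluate $\tilde x_1^2 = z(\tau)$ and $\tilde x_1\tilde x_2 = \tfrac12 \dot z(\tau)$ by substituting the values of $\sin(2\sqrt{u_2}\tau)$, $\cos(2\sqrt{u_2}\tau)$ and $\sinh(2\sqrt{u_1}\tau)$, $\cosh(2\sqrt{u_1}\tau)$ supplied by Lemma \ref{switch_time}. Using $c_Y = x_2^2 + u_2 x_1^2 + 1/x_1^2$ (resp.\ $c_X = x_2^2 - u_1 x_1^2 + 1/x_1^2$) to absorb the constant terms, together with the square-completion $(u_2 x_1^2 + x_2^2)^2 - 4 u_2 x_1^2 x_2^2 = (u_2 x_1^2 - x_2^2)^2$ (and its $X$-analogue), the expressions collapse to
\begin{equation*}
\tilde x_1^2 = \frac{1}{x_2^2 + u_2 x_1^2} \quad (Y\text{-arc}), \qquad \tilde x_1^2 = \frac{1}{x_2^2 - u_1 x_1^2} \quad (X\text{-arc}),
\end{equation*}
together with $\tilde x_1 \tilde x_2 = -x_2\, \tilde x_1^2/x_1$ in both cases. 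Dividing by $\tilde x_1^2$ yields the slope relation $\tilde x_2/\tilde x_1 = -x_2/x_1$, which is exactly the constant-magnitude, alternating-sign claim.

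For the non-symmetry statement, reflection across the $x_1$-axis would further require $\tilde x_1 = x_1$, i.e.\ $x_1^2 (x_2^2 + u_2 x_1^2) = 1$ on a $Y$-arc and $x_1^2 (x_2^2 - u_1 x_1^2) = 1$ on an $X$-arc. Each of these is a single algebraic relation cutting out a codimension-one curve in the plane of admissible switching points and is violated by generic data; hence consecutive switching points lie on two lines through the origin with slopes of equal magnitude and opposite sign, but at generically different distances from the origin, in agreement with Fig.\ \ref{fig:threeswitchings}. The only genuine difficulty is the algebraic book-keeping in simplifying $z(\tau)$ and $\dot z(\tau)$; once the square-completion above is spotted, everything collapses to the two clean identities and the slope relation follows in a few lines.
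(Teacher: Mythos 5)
Your proposal is correct, and I verified the two key identities: with $z=x_1^2$ one indeed gets $\ddot z+4u_2z=2c_Y$ (resp.\ $\ddot z-4u_1z=2c_X$), and substituting the inter-switching data from Lemma \ref{switch_time} into the resulting harmonic/hyperbolic representation does collapse, via the square-completion $(u_2x_1^2+x_2^2)^2-4u_2x_1^2x_2^2=(u_2x_1^2-x_2^2)^2$ and its hyperbolic analogue, to $\tilde x_1^2=1/(x_2^2+u_2x_1^2)$ on a $Y$-arc, $\tilde x_1^2=1/(x_2^2-u_1x_1^2)$ on an $X$-arc, and $\tilde x_1\tilde x_2=-x_2\tilde x_1^2/x_1$, whence $\tilde x_2/\tilde x_1=-x_2/x_1$. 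The route differs from the paper's in a worthwhile way: the paper parametrizes each bang arc by its crossing of the $x_1$-axis (the auxiliary points $(\alpha,0)$, $(\beta,0)$, $(\alpha_1,0)$), splits the inter-switching time as $\tau_s-\tau_0$, and grinds through angle-addition formulas to reach (\ref{zeta_xi1})--(\ref{zeta_xi2}); you instead place the initial data directly at the switching point and exploit the linearity of the equation for $x_1^2$, which eliminates the auxiliary points and most of the bookkeeping. Your closed forms agree with the paper's (e.g.\ $1/(\mu^2+u_2\kappa^2)$ reproduces (\ref{zeta_xi1}) after substituting $\mu^2$), and as a bonus the $X$-arc formula immediately yields $(s-u_1)\kappa_1^2\kappa_2^2=1$, which is precisely equation (\ref{s_equation}) that the paper later rederives in Theorem \ref{prop:time}. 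On the non-symmetry claim you are at the same level of rigor as the paper (which only asserts $\zeta\neq\kappa$ ``in general''); your explicit criterion $x_1^2(x_2^2\mp u\,x_1^2)\neq 1$ is in fact sharper, and one could close it fully by noting that symmetry of consecutive switching points would force the next bang arc to retrace the same orbit of the preceding one. The only implicit point worth making explicit is that $x_2^2-u_1x_1^2>0$ at a switching point initiating an $X$-arc, which is guaranteed by the requirement $\cosh(2\sqrt{u_1}\tau)>1$ in Lemma \ref{switch_time}.
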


\begin{proof}
Consider the trajectory shown in Fig.\ \ref{fig:threeswitchings}, with
switching points $(\kappa,\mu), (\zeta,\xi)$ and $(\lambda,\nu)$. Note that
$(\kappa,\mu)$ is the intersection of an $X$-trajectory, passing from
$(\alpha,0)$ and a $Y$-trajectory passing from $(\beta,0)$. From the first integrals (\ref{first_integral_1}) and (\ref{first_integral_2}) we have
\begin{align}
\label{int_cur_alpha}\mu^{2}-u_{1}\kappa^{2}+\frac{1}{\kappa^{2}}  &  =
-u_{1}\alpha^{2}+\frac{1}{\alpha^{2}},\\
\label{int_cur_beta}
\mu^{2}+u_{2}\kappa^{2}+\frac{1}{\kappa^{2}}  &  = u_{2}\beta^{2}+\frac
{1}{\beta^{2}}.
\end{align}
From these equations we obtain
\begin{equation}
\frac{\mu}{\kappa}=\frac{\sqrt{(\kappa^{2}-\alpha^{2})(u_{1}\alpha^{2}%
\kappa^{2}+1)}}{\alpha\kappa^{2}}%
\end{equation}
and
\begin{equation}
\label{beta}\beta^{2}+\frac{1}{u_{2}\beta^{2}}=\frac{(u_{1}+u_{2})\alpha
^{2}\kappa^{2}-u_{1}\alpha^{4}+1}{u_{2}\alpha^{2}}.
\end{equation}
Note that $\mu>0$ since this kind of switching can occur only in the upper
quadrant. We will show that $\xi/\zeta=-\mu/\kappa$. Starting from
$(\kappa,\mu)$, let $\tau_{0}, \tau_{s}$ denote the time to reach the points
$(\beta,0),(\zeta,\xi)$, respectively. Observe that $\zeta$ satisfies
(\ref{x1_in_Y}) for $t=\tau_{s}-\tau_{0}$ while $\kappa$ satisfies this equation for
$t=-\tau_{0}$. From the first relation we get
\begin{align}
\label{zeta}\zeta^{2}  &  =\frac{1}{2}\left(  \beta^{2}+\frac{1}{u_{2}%
\beta^{2}}\right)  +\nonumber\\
&  \frac{1}{2}\left(  \beta^{2}-\frac{1}{u_{2}\beta^{2}}\right)  [\cos
(2\sqrt{u_{2}}\tau_{0})\cos(2\sqrt{u_{2}}\tau_{s})+\sin(2\sqrt{u_{2}}\tau_{0}%
)\sin(2\sqrt{u_{2}}\tau_{s})],
\end{align}
while from the corresponding relation for $\kappa$
\begin{equation}
\kappa^2=\frac{1}{2}\left(  \beta^{2}+\frac{1}{u_{2}%
\beta^{2}}\right)  +\frac{1}{2}\left(  \beta^{2}-\frac{1}{u_{2}\beta^{2}%
}\right)  \cos(2\sqrt{u_{2}}\tau_0).\nonumber
\end{equation}
Using (\ref{beta})
we obtain
\begin{equation}
\label{t01}\frac{1}{2}\left(  \beta^{2}-\frac{1}{u_{2}\beta^{2}}\right)
\cos(2\sqrt{u_{2}}\tau_{0})    =\frac{(u_{2}-u_{1})\alpha^{2}\kappa^{2}%
+u_{1}\alpha^{4}-1}{2u_{2}\alpha^{2}},
\end{equation}
and after an elementary but a bit lengthy calculation
\begin{equation}
\label{t02}
\frac{1}{2}\left(  \beta^{2}-\frac{1}{u_{2}\beta^{2}}\right)  \sin
(2\sqrt{u_{2}}\tau_{0})   =\sqrt{\frac{(\kappa^{2}-\alpha^{2})(u_{1}\alpha
^{2}\kappa^{2}+1)}{u_{2}\alpha^{2}}}.
\end{equation}
Here we used
that $u_{2}\beta^{2}>1/\beta^{2}$, since $\dot{x}_{2}<0$ at $(\beta,0)$, and
$\sin(2\sqrt{u_{2}}\tau_{0})>0$, since $\tau_{0}<T_0/2$, $T_0=\pi/\sqrt{u_{2}}$ being
the period of the closed orbit. For the terms involving the switching time we
use (\ref{par_2}), (\ref{int_cur_alpha}) to obtain
\begin{equation}
\label{ts1}\cos(2\sqrt{u_{2}}\tau_{s})   =\frac{(u_{1}-u_{2})\alpha^{2}%
\kappa^{4}+(1-u_{1}\alpha^{4})\kappa^{2}-\alpha^{2}}{(u_{1}+u_{2})\alpha
^{2}\kappa^{4}+(1-u_{1}\alpha^{4})\kappa^{2}-\alpha^{2}}
\end{equation}
and
\begin{equation}
\label{ts2}
\sin(2\sqrt{u_{2}}\tau_{s})   =-\frac{2\kappa^{2}\sqrt{u_{2}\alpha^{2}%
(\kappa^{2}-\alpha^{2})(u_{1}\alpha^{2}\kappa^{2}+1)}}{(u_{1}+u_{2})\alpha
^{2}\kappa^{4}+(1-u_{1}\alpha^{4})\kappa^{2}-\alpha^{2}}.
\end{equation}
Observe from (\ref{par_2}) that $\sin(2\sqrt{u_{2}}\tau_{s})<0$ since $\mu>0$. By
using (\ref{t01}), (\ref{t02}), (\ref{ts1}), (\ref{ts2}) in (\ref{zeta}), and the relations (\ref{beta})
and
\begin{equation}
\label{xibeta}\xi^{2}+u_{2}\zeta^{2}+\frac{1}{\zeta^{2}} = u_{2}\beta
^{2}+\frac{1}{\beta^{2}},
\end{equation}
we obtain
\begin{align}
\label{zeta_xi1}\zeta &  =\frac{\alpha\kappa}{\sqrt{(u_{1}+u_{2})\alpha
^{2}\kappa^{4}+(1-u_{1}\alpha^{4})\kappa^{2}-\alpha^{2}}},\\
\label{zeta_xi2}
\xi &  =-\frac{\sqrt{(\kappa^{2}-\alpha^{2})(u_{1}\alpha^{2}\kappa^{2}+1)}%
}{\kappa\sqrt{(u_{1}+u_{2})\alpha^{2}\kappa^{4}+(1-u_{1}\alpha^{4})\kappa
^{2}-\alpha^{2}}},
\end{align}
so
\begin{equation}
\label{ratio1}\frac{\xi}{\zeta}=-\frac{\sqrt{(\kappa^{2}-\alpha^{2}%
)(u_{1}\alpha^{2}\kappa^{2}+1)}}{\alpha\kappa^{2}}=-\frac{\mu}{\kappa}.
\end{equation}
Obviously, it is $\zeta\neq\kappa$ in general so $(\zeta,\xi)\neq (\kappa,-\mu)$, i.e. the subsequent switching point is different from the symmetric image of the previous switching point with respect to the $x_1$-axis.

A similar computation shows that $\nu/\lambda=-\xi/\zeta$. Note that $(\zeta,\xi)$ belongs to
the $X$-trajectory passing from $(\alpha_{1},0)$, so
\begin{equation}
\label{xialpha}\xi^{2}-u_{1}\zeta^{2}+\frac{1}{\zeta^{2}} = -u_{1}\alpha
_{1}^{2}+\frac{1}{\alpha_{1}^{2}}.
\end{equation}
Using (\ref{xibeta}), (\ref{xialpha}) we obtain
\begin{equation}
\label{alpha}\alpha_{1}^{2}-\frac{1}{u_{1}\alpha_{1}^{2}}=\frac{(u_{1}%
+u_{2})\beta^{2}\zeta^{2}-u_{2}\beta^{4}-1}{u_{1}\beta^{2}}.
\end{equation}
and an alternative expression for $\xi/\zeta$
\begin{equation}
\frac{\xi}{\zeta}=-\frac{\sqrt{(\beta^{2}-\zeta^{2})(u_{2}\beta^{2}\zeta
^{2}-1)}}{\beta\zeta^{2}}.
\end{equation}
Starting from $(\zeta,\xi)$ let $\tau_{0}, \tau_{s}$ now denote the time to reach the
points $(\alpha_{1},0),(\lambda,\nu)$, respectively. Observe that $\lambda$
satisfies (\ref{x1_in_X}) for $t=\tau_{s}-\tau_{0}$ while $\zeta$ satisfies this
equation for $t=-\tau_{0}$. From the first relation we get
\begin{align}
\label{lambda}\lambda^{2}  &  =\frac{1}{2}\left(  \alpha_{1}^{2}-\frac
{1}{u_{1}\alpha_{1}^{2}}\right)  +\nonumber\\
&  \frac{1}{2}\left(  \alpha_{1}^{2}+\frac{1}{u_{1}\alpha_{1}^{2}}\right)
[\cosh(2\sqrt{u_{1}}\tau_{0})\cosh(2\sqrt{u_{1}}\tau_{s})-\sinh(2\sqrt{u_{1}}%
\tau_{0})\sinh(2\sqrt{u_{1}}\tau_{s})],
\end{align}
while from the corresponding relation for $\zeta$ and by using (\ref{alpha})
we obtain
\begin{align}
\label{t03}\frac{1}{2}\left(  \alpha_{1}^{2}+\frac{1}{u_{1}\alpha_{1}^{2}%
}\right)  \cosh(2\sqrt{u_{1}}\tau_{0})  &  =\frac{(u_{1}-u_{2})\beta^{2}\zeta
^{2}+u_{2}\beta^{4}+1}{2u_{1}\beta^{2}},\\
\label{t04}
\frac{1}{2}\left(  \alpha_{1}^{2}+\frac{1}{u_{1}\alpha_{1}^{2}}\right)
\sinh(2\sqrt{u_{1}}\tau_{0})  &  =\sqrt{\frac{(\beta^{2}-\zeta^{2})(u_{2}%
\beta^{2}\zeta^{2}-1)}{u_{1}\beta^{2}}}.
\end{align}
For the terms involving the switching time we use (\ref{par_1}),
(\ref{xibeta}) and find
\begin{align}
\label{ts3}\cosh(2\sqrt{u_{1}}\tau_{s})  &  =\frac{(u_{1}-u_{2})\beta^{2}%
\zeta^{4}+(1+u_{2}\beta^{4})\zeta^{2}-\beta^{2}}{-(u_{1}+u_{2})\beta^{2}%
\zeta^{4}+(1+u_{2}\beta^{4})\zeta^{2}-\beta^{2}},\\
\label{ts4}
\sinh(2\sqrt{u_{1}}\tau_{s})  &  =\frac{2\zeta^{2}\sqrt{u_{1}\beta^{2}(\beta
^{2}-\zeta^{2})(u_{2}\beta^{2}\zeta^{2}-1)}}{-(u_{1}+u_{2})\beta^{2}\zeta
^{4}+(1+u_{2}\beta^{4})\zeta^{2}-\beta^{2}}.
\end{align}
By using (\ref{t03}), (\ref{t04}), (\ref{ts3}), (\ref{ts4}) in (\ref{lambda}), and the relations
(\ref{alpha}) and
\begin{equation}
\label{nualpha}\nu^{2}-u_{1}\lambda^{2}+\frac{1}{\lambda^{2}} = -u_{1}%
\alpha_{1}^{2}+\frac{1}{\alpha_{1}^{2}},
\end{equation}
we obtain
\begin{align}
\label{lambda_nu1}\lambda &  =\frac{\beta\zeta}{\sqrt{-(u_{1}+u_{2})\beta
^{2}\zeta^{4}+(1+u_{2}\beta^{4})\zeta^{2}-\beta^{2}}},\\
\label{lambda_nu2}
\nu &  =\frac{\sqrt{(\beta^{2}-\zeta^{2})(u_{2}\beta^{2}\zeta^{2}-1)}}%
{\zeta\sqrt{-(u_{1}+u_{2})\beta^{2}\zeta^{4}+(1+u_{2}\beta^{4})\zeta^{2}%
-\beta^{2}}},
\end{align}
so
\begin{equation}
\frac{\nu}{\lambda}=\frac{\sqrt{(\beta^{2}-\zeta^{2})(u_{2}\beta^{2}\zeta
^{2}-1)}}{\beta\zeta^{2}}=-\frac{\xi}{\zeta}.
\end{equation}
Obviously, it is $\lambda\neq\zeta$ in general so $(\lambda,\nu)\neq (\zeta,-\xi)$, i.e. the subsequent switching point is different from the symmetric image of the previous switching point with respect to the $x_1$-axis.
\end{proof}

\begin{figure}[t]
\centering
\includegraphics[width=0.8\linewidth]{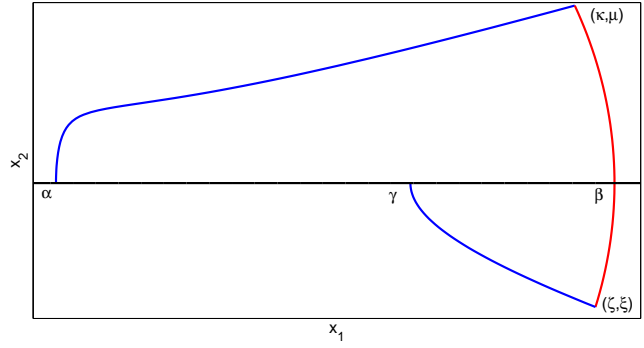}\caption{Blue curves correspond to $X$-segments, red curves to $Y$-segments.}%
\label{fig:forbidden}%
\end{figure}



In the following proposition we use Lemma \ref{prop:ratio} to determine the
form of the optimal trajectory.

\begin{proposition}
[\textrm{Form of the optimal trajectory}]\label{prop:spiral} The optimal
trajectory can have the one-switching form $XY$ or the spiral form
$YX\ldots YXY$ with an even number of switchings.
\end{proposition}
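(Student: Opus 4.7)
My plan is to classify the possible switching sequences by combining three ingredients: (i)~the constant-magnitude property of the switching-ratio from Lemma~\ref{prop:ratio}; (ii)~a feasibility constraint implicit in the inter-switching formula~(\ref{par_1}); and (iii)~the boundary geometry at $(1,0)$ and $(\gamma,0)$. From (\ref{par_1}) one has
\[
\cosh(2\sqrt{u_1}\,\tau) \;=\; \frac{x_2^2 + u_1 x_1^2}{x_2^2 - u_1 x_1^2},
\]
and since $\cosh \ge 1$ and the numerator is positive, the denominator must be positive too, i.e.\ $|x_2/x_1| > \sqrt{u_1}$ at any pair of switches linked by an interior $X$-arc. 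By Lemma~\ref{prop:ratio} the ratio $m = |x_2/x_1|$ is common to every switch of a given extremal, so either $m > \sqrt{u_1}$ at all switches, or no interior $X$-arc between switches is admissible.

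I would then analyse the two possible initial arcs at $(1,0)$. If the first arc is $X$, its first integral gives at the first switch $(\kappa_1,\mu_1)$
\[
m^2 \;=\; \frac{\mu_1^2}{\kappa_1^2} \;=\; \frac{(\kappa_1^2-1)(u_1\kappa_1^2+1)}{\kappa_1^4},
\]
and the identity $u_1\kappa_1^4 - (\kappa_1^2-1)(u_1\kappa_1^2+1) = (u_1-1)\kappa_1^2 + 1 > 0$ forces $m^2 < u_1$. Hence no interior $X$-arc is admissible, and the only $X$-started candidates are $XY$ and $XYX$.

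Next I rule out every pattern whose terminal arc is an $X$-arc entering $(\gamma,0)$. Such an arc lies in $\{x_1\ge\gamma\}$ (from the $X$-first integral through $(\gamma,0)$), so the final $YX$ junction $(\zeta_n,\xi_n)$ must satisfy $\zeta_n \ge \gamma > 1$. For $XYX$, formula~(\ref{zeta_xi1}) with $\alpha = 1$ yields
\[
\zeta_1^2 \;=\; \frac{\kappa_1^2}{(u_1+u_2)\kappa_1^4 + (1-u_1)\kappa_1^2 - 1},
\]
which is strictly decreasing in $\kappa_1^2$ on $[1,\infty)$ with maximum value $1/\sqrt{u_2}\le 1 < \gamma$, contradicting $\zeta_1\ge\gamma$. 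For $Y$-started patterns, the first $YX$ switch obeys $\zeta_1\le 1$ since the $Y$-orbit through $(1,0)$ is confined to $x_1\in[1/\sqrt{u_2},1]$; iterating (\ref{zeta_xi1}) and (\ref{lambda_nu1}) along the spiral and exploiting the analogous monotonicity, I expect to produce a uniform bound $\zeta_i < \gamma$ at every $L_-$ switch, ruling out any $Y$-started pattern that terminates with an $X$-arc.

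Combining these eliminations leaves exactly the patterns $XY$ and $Y(XY)^k$ with $k\ge 1$, as claimed. I anticipate the main technical obstacle to be the inductive bound in the third paragraph: the recursive relations~(\ref{zeta_xi1}) and~(\ref{lambda_nu1}) are algebraically intricate, and a clean monotonicity or invariant argument (likely anchored by the forbidden configuration depicted in Fig.~\ref{fig:forbidden}) will be needed to secure the uniform bound $\zeta_i < \gamma$ along the $Y$-started spiral.
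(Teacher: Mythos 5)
Your overall classification strategy is sound and the first two eliminations are correct and essentially the paper's own: the constraint $\cosh(2\sqrt{u_1}\tau)=(x_2^2+u_1x_1^2)/(x_2^2-u_1x_1^2)>1$ forcing $m^2=s>u_1$ across any interior $X$-arc, together with the bound $m^2<u_1$ on the $X$-orbit through $(1,0)$, is exactly how the paper kills $X$-started sequences with three or more switchings, and your elimination of $XYX$ via the monotonicity of (\ref{zeta_xi1}) (note $\zeta_1^2\le 1/u_2$, not $1/\sqrt{u_2}$) is a valid variant of the paper's observation that $\kappa^2\zeta^2=1/(s+u_2)<1$ while $\kappa^2>1$ and $\zeta^2>\gamma^2$.

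The genuine gap is your third paragraph. You propose to exclude $Y$-started sequences terminating in an $X$-arc by iterating (\ref{zeta_xi1}) and (\ref{lambda_nu1}) to get a uniform bound $\zeta_i<\gamma$, and you yourself flag this induction as the unresolved technical obstacle; as written, the proof is incomplete precisely there, and the recursion you would have to control is genuinely messy. But no induction is needed: you already have everything required. Any such sequence with at least three switchings contains an interior $X$-arc, so by your own first paragraph $s=m^2>u_1$ at \emph{every} switching point, in particular at the last one $(\zeta,\xi)$. Since that point lies on the $X$-orbit through $(\gamma,0)$, the first integral (\ref{first_integral_1}) gives $\xi^2-u_1\zeta^2+1/\zeta^2=-u_1\gamma^2+1/\gamma^2$, whose left-hand side equals $(s-u_1)\zeta^2+1/\zeta^2>0$ while the right-hand side is negative because $u_1\ge1$ and $\gamma>1$ --- a contradiction. (The remaining case $YX$ with a single switching is excluded by the confinement argument you already give: the switch has $x_1\le1<\gamma$, whereas the $X$-orbit through $(\gamma,0)$ lies in $\{x_1\ge\gamma\}$.) This is the paper's two-line argument; replacing your proposed induction with it closes the proof.
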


\begin{proof}
We first show that when the optimal trajectory has more than one switching,
it cannot start with an $X$-segment. For just two switchings, consider the
trajectory $XYX$ depicted in Fig.\ \ref{fig:forbidden}, where $\alpha=1$
(starting point), $(\gamma,0), \gamma>1$ is the target point and $(\kappa
,\mu),(\zeta,\xi)$ are the switching points. Since both of the switching
points belong to the $Y$-segment passing through $(\beta,0)$, their
coordinates satisfy (\ref{int_cur_beta}). If we denote by $s$ the common
ratio
\begin{equation}
\frac{\mu^{2}}{\kappa^{2}}=\frac{\xi^{2}}{\zeta^{2}}=s,\nonumber
\end{equation}
then both $\kappa,\zeta$ satisfy the equation
\begin{equation}
(s+u_{2})x_{1}^{4}-(u_{2}\beta^{2}+\frac{1}{\beta^{2}})x_{1}^{2}+1=0,\nonumber
\end{equation}
so
\begin{equation}
\kappa^{2}\zeta^{2}=\frac{1}{s+u_{2}}<1,\nonumber
\end{equation}
since $u_{2}\geq1,s>0$. But also
\begin{equation}
\kappa^{2}\zeta^{2}>1,\nonumber
\end{equation}
since $\kappa^{2}>1$ and $\zeta^{2}>\gamma^{2}>1$. Thus this trajectory cannot
be optimal.

For more switchings, consider the case shown in Fig
\ref{fig:threeswitchings}, where now $\alpha=1$, and use $s$ to denote the
common ratio of the squares of the coordinates at the switching points. If $\tau$
is the switching time between $(\zeta,\xi)$ and $(\lambda,\nu)$, then from
(\ref{par_1}) we obtain
\begin{equation}
\frac{s}{u_{1}}=\frac{\cosh(2\tau\sqrt{u_{1}})+1}{\cosh(2\tau\sqrt{u_{1}})-1}>1.\nonumber
\end{equation}
But from (\ref{ratio1}) we find ($\alpha=1$)
\begin{equation}
\frac{s}{u_{1}}=\frac{(u_{1}\kappa^{2}+1)(\kappa^{2}-1)}{u_{1}\kappa^{4}%
}<1\Leftrightarrow(u_{1}-1)\kappa^{2}>-1,\nonumber
\end{equation}
since $u_{1}\geq1$. Thus if the optimal trajectory has more than one
switching, it needs to start with a $Y$-segment.

We next show that the optimal trajectory reaches the target point $(\gamma,0),
\gamma>1$ with a $Y$-segment. This is obviously the case for one switching,
and also for two switchings since only the $YXY$ trajectory is permitted
(the $XYX$ was excluded above). For more than two switchings consider the
situation shown in Fig.\ \ref{fig:forbidden}. It is $\mu^{2}/\kappa^{2}=\xi
^{2}/\zeta^{2}=s$ and $s>u_{1}$ since at least one $YXY$-segment is included
in the trajectory. Point $(\zeta,\xi)$ belongs to the final $X$-segment ending
to $(\gamma,0)$, so
\begin{equation}
(s-u_{1})\zeta^{2}+\frac{1}{\zeta^{2}}=-u_{1}\gamma^{2}+\frac{1}{\gamma^{2}}.\nonumber
\end{equation}
The left hand side is positive, since $s>u_{1}$, while the right had side is
negative, since $\gamma>1,u_{1}\geq1$. Thus the optimal trajectory reaches the
target point with a $Y$-segment.
\end{proof}

\begin{corollary}
For $|u|\leq1$ the optimal solution has only one switching.
\end{corollary}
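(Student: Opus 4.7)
The plan is to reduce the claim to Proposition \ref{prop:spiral} and then eliminate the spiral case by a direct dynamical observation at the initial condition $(1,0)$.

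First, I would note that the constraint $|u|\leq 1$, combined with the standing hypothesis $u_1,u_2\geq 1$ of Problem \ref{problem1}, forces $u_1=u_2=1$. Under this choice the $Y$-vector field at the starting point satisfies $Y(1,0)=f(1,0)+g(1,0)=\bigl(0,\,1/1^3-1\cdot 1\bigr)=(0,0)$, so $(1,0)$ is an equilibrium of the $Y$-flow. Consequently any $Y$-trajectory issuing from $(1,0)$ remains frozen there, and in particular cannot serve as a nontrivial initial segment of a controlled trajectory that must reach $(\gamma,0)$ with $\gamma>1$.

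Next I would invoke Proposition \ref{prop:spiral}, which asserts that the only two admissible shapes for the optimal trajectory are the single-switching $XY$ concatenation and the spiral $YX\cdots YXY$ with an even (positive) number of switches. The spiral structure begins with a genuine $Y$-segment out of $(1,0)$, which the equilibrium observation just made rules out. Hence the only remaining possibility is $XY$, i.e., exactly one switching.

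No substantive obstacle is expected; the content of the corollary is essentially an instantaneous consequence of the fact that the initial condition sits precisely at a $Y$-fixed point when $u_2=1$. The one item worth double-checking is feasibility of an $XY$ trajectory actually reaching $(\gamma,0)$ from $(1,0)$ under $u_1=u_2=1$, but this follows directly from the explicit evolution formulas of Lemma \ref{prop:x1} together with the first-integral relations \eqref{first_integral_1}--\eqref{first_integral_2} already used in the synthesis.
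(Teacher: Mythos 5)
Your proposal is correct and follows essentially the same route as the paper: both observe that $u_2=1$ makes $(1,0)$ an equilibrium of the $Y$-flow, so the spiral form of Proposition \ref{prop:spiral} (which must begin with a $Y$-segment) is excluded and only $XY$ remains. The extra remark on feasibility of the $XY$ concatenation is a harmless addition the paper leaves implicit.
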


\begin{proof}
For $u=u_{2}=1$ the starting point $(1,0)$ is an equilibrium point of system (\ref{system1}), (\ref{system2}). So the optimal trajectory cannot start with
a $Y$-segment. The only trajectory thus permitted is $XY$
\end{proof}

From Proposition \ref{prop:spiral} we see that the optimal trajectory can have
aside from the expected one-switching form, shown in Fig.\ \ref{fig:upper}, the
spiral form shown in Fig.\ \ref{fig:spiral}. An intuitive understanding of this
latter form can be obtained by viewing system equations (\ref{system1}), (\ref{system2}) as describing the motion of a unit mass particle with position
$x_{1}$ and velocity $x_{2}$. In light of this interpretation we see that
along a spiral trajectory the particle, instead of moving directly to the
target, goes close to $x_{1}=0$ where there is a strong repulsive potential
($1/x_{1}^{3}$) to acquire speed and reach the target point faster. In the
following theorem we calculate the transfer time for the candidate optimal trajectories.

\begin{figure}[t]
\centering
\begin{tabular}
[c]{c}%
\subfigure[$\ $]{ \label{fig:upper} \includegraphics[width=.8\linewidth]{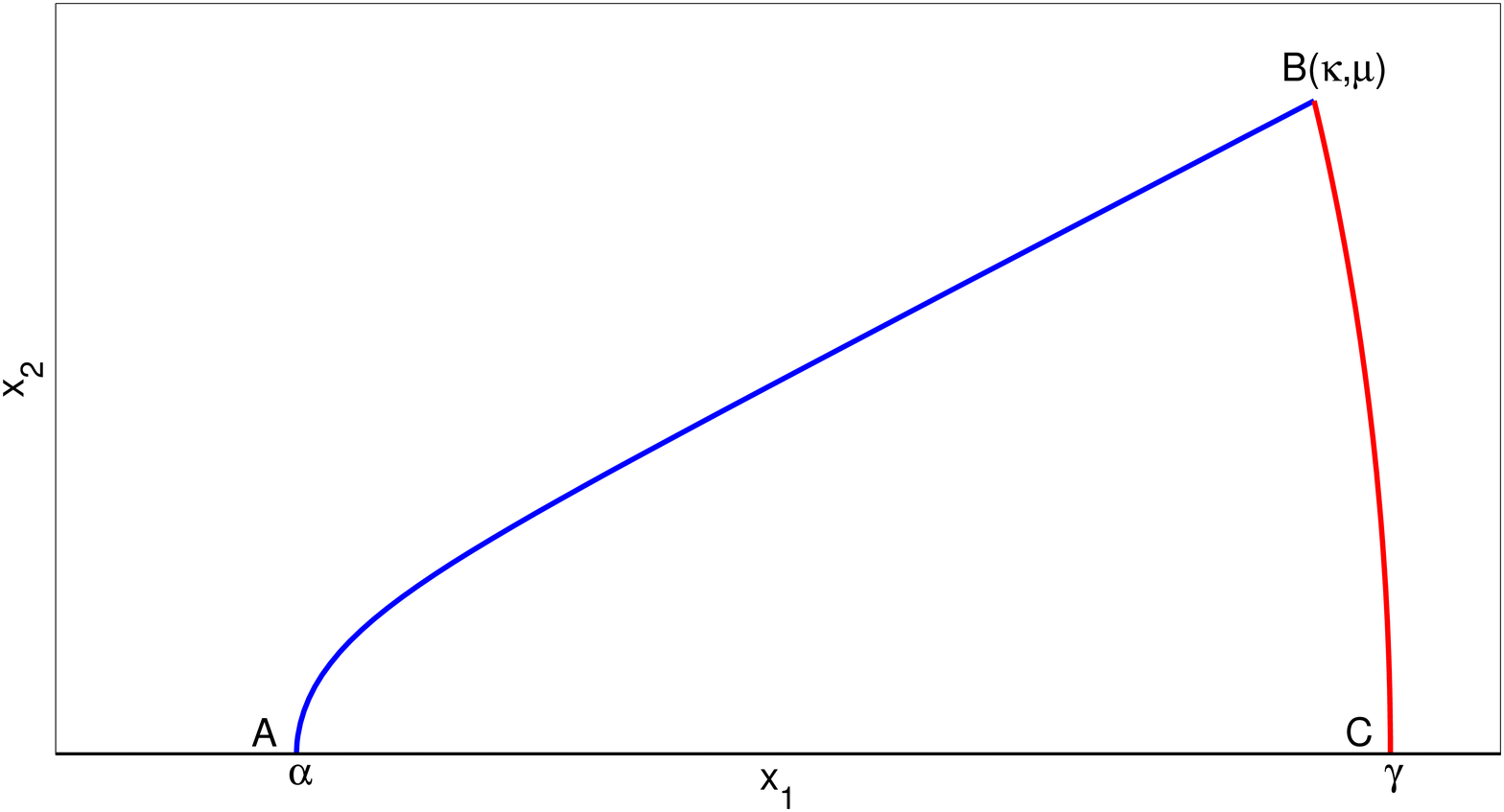}}\\
\subfigure[$\ $]{ \label{fig:spiral} \includegraphics[width=.8\linewidth]{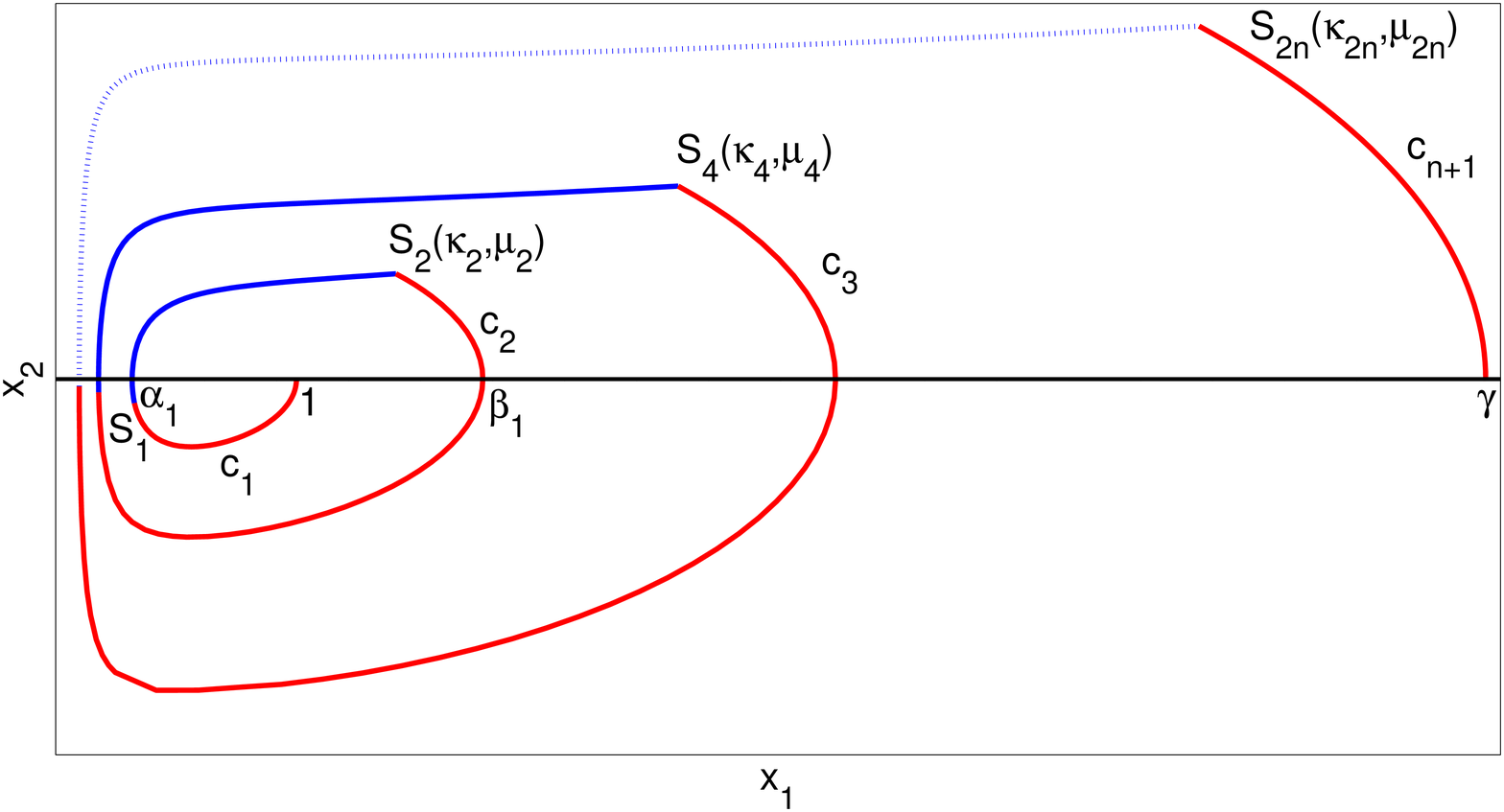}}
\end{tabular}
\caption{(a) Trajectory with one switching (zero turns) (b) Trajectory with
$n$ turns.}%
\label{fig:opt_traj_final}%
\end{figure}


\begin{theorem}
\label{prop:time} Starting from $(1,0)$, the necessary time to reach the
target point $(\gamma,0), \gamma>1$ with one switching is
\begin{equation}
\label{time_0}T_{0}=\frac{1}{\sqrt{u_{1}}}\sinh^{-1}\left(  \sqrt{\frac
{u_{1}(\gamma^{2}-1)(u_{2}\gamma^{2}-1)}{\gamma^{2}(u_{1}+u_{2})(u_{1}+1)}%
}\right)  +\frac{1}{\sqrt{u_{2}}}\sin^{-1}\left(  \sqrt{\frac{u_{2}(\gamma
^{2}-1)(u_{1}\gamma^{2}+1)}{(u_{1}+u_{2})(u_{2}\gamma^{4}-1)}}\right)  .
\end{equation}
The necessary time to reach the target with $n$ turns ($2n$ switchings) is
\begin{equation}
\label{time_n}T_{n}=T_{I}+nT_{X}+(n-1)T_{Y}+T_{F},
\end{equation}
where
\begin{align}
\label{time_in}T_{I}  &  = \frac{1}{2\sqrt{u_{2}}}\cos^{-1}\left(
-\frac{sc_{1}+u_{2}\sqrt{c_{1}^{2}-4(s+u_{2})}}{(s+u_{2})\sqrt{c_{1}%
^{2}-4u_{2}}}\right),\\
\label{time_fi}
T_{F}  &  = \frac{1}{2\sqrt{u_{2}}}\cos^{-1}\left(  \frac{-sc_{n+1}+u_{2}%
\sqrt{c_{n+1}^{2}-4(s+u_{2})}}{(s+u_{2})\sqrt{c_{n+1}^{2}-4u_{2}}}\right)  ,
\end{align}
\begin{align}
\label{switch_X}T_{X}  &  = \frac{1}{2\sqrt{u_{1}}}\cosh^{-1}\left(
\frac{s+u_{1}}{s-u_{1}}\right),\\
\label{switch_Y}
T_{Y}  &  = \frac{1}{2\sqrt{u_{2}}}\left(  2\pi-\cos^{-1}\left(  \frac
{s-u_{2}}{s+u_{2}}\right)  \right)  ,
\end{align}
\begin{align}
\label{c_1}c_{1}  &  =u_{2}+1,\\
\label{c_n}
c_{n+1}  &  =u_{2}\gamma^{2}+\frac{1}{\gamma^{2}},
\end{align}
and $s$ is the solution of the transcendental equation
\begin{equation}
\label{transcendental}\frac{c_{1}+\sqrt{c_{1}^{2}-4(s+u_{2})}}{c_{n+1}%
+\sqrt{c_{n+1}^{2}-4(s+u_{2})}}=\left(  \frac{s-u_{1}}{s+u_{2}}\right)  ^{n}%
\end{equation}
in the interval $u_{1}<s\leq(u_{2}-1)^{2}/4$. The constants $c_{1}$ and
$c_{n+1}$ characterize the first and the last $Y$-segments, respectively, of
the trajectory. The number of turns satisfies the following inequality
\begin{equation}
\label{turns}
n\leq\left[\frac{T_0}{T_X(s_+)}\right],
\end{equation}
where $s_+=(u_{2}-1)^{2}/4$ and $[\,]$ denotes the integer part.
\end{theorem}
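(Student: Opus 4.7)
The proof splits into the one-switching case (giving $T_0$) and the $n$-turn case (giving $T_n$).

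For $T_0$, the trajectory is $XY$ with a single switching point $(\kappa,\mu)$. The plan is to write the first integrals (\ref{first_integral_1}) and (\ref{first_integral_2}) at $(\kappa,\mu)$, anchored at the endpoints $(1,0)$ and $(\gamma,0)$ respectively; subtracting them gives $\kappa^{2}=(u_{2}\gamma^{2}+u_{1}-1+1/\gamma^{2})/(u_{1}+u_{2})$, from which the factorizations $\kappa^{2}-1=(\gamma^{2}-1)(u_{2}\gamma^{2}-1)/[\gamma^{2}(u_{1}+u_{2})]$ and $\gamma^{2}-\kappa^{2}=(\gamma^{2}-1)(u_{1}\gamma^{2}+1)/[\gamma^{2}(u_{1}+u_{2})]$ follow directly. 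Inverting (\ref{x1_in_X}) with $\alpha=1$ at $x_{1}=\kappa$, together with the half-angle identity $\cosh(2y)-1=2\sinh^{2}y$, gives $t_{X}=\sinh^{-1}(\sqrt{u_{1}(\kappa^{2}-1)/(u_{1}+1)})/\sqrt{u_{1}}$; analogously, inverting (\ref{x1_in_Y}) read backward from $(\gamma,0)$ with $\beta=\gamma$, and using $1-\cos(2y)=2\sin^{2}y$, gives $t_{Y}=\sin^{-1}(\sqrt{u_{2}\gamma^{2}(\gamma^{2}-\kappa^{2})/(u_{2}\gamma^{4}-1)})/\sqrt{u_{2}}$. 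Substituting the factorizations produces (\ref{time_0}).

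For the $n$-turn case I would first establish that all $2n$ switching points share one common value $s=x_{2}^{2}/x_{1}^{2}$: for $n\geq 2$ this is Lemma \ref{prop:ratio}, and for $n=1$ it follows from equating the two evaluations of $\cosh(2\sqrt{u_{1}}\tau)$ in (\ref{par_1}) at the two endpoints of the single $X$-segment, which reduces to $\zeta^{2}\nu^{2}=\xi^{2}\lambda^{2}$. Substituting $x_{2}^{2}=sx_{1}^{2}$ directly into (\ref{par_1}) yields (\ref{switch_X}); the same substitution into (\ref{par_2}) gives $\cos(2\sqrt{u_{2}}\tau)=(s-u_{2})/(s+u_{2})$, and the condition $\sin(2\sqrt{u_{2}}\tau)<0$ at an upper-half $XY$-switch forces $2\sqrt{u_{2}}\tau\in(\pi,2\pi)$, producing (\ref{switch_Y}). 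For $T_{I}$ and $T_{F}$ I would evaluate (\ref{x1_in_Y}) with $\beta=1$ and $\beta=\gamma$ at the first and last switching points respectively, inserting the appropriate root of the switching-quadratic $(s+u_{2})z^{2}-c_{k}z+1=0$; the factorizations $c_{1}^{2}-4u_{2}=(u_{2}-1)^{2}$ and $c_{n+1}^{2}-4u_{2}=(u_{2}\gamma^{2}-1/\gamma^{2})^{2}$ simplify the denominators and produce (\ref{time_in}) and (\ref{time_fi}). Selecting the smaller root for $\zeta_{1}^{2}$ and the larger root for $\lambda_{n}^{2}$ is the choice compatible with both the spiral iteration and the transcendental-equation derivation below.

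The transcendental equation (\ref{transcendental}) is obtained by telescoping the product identities coming from the common value of $s$. The product $z_{2k-1}z_{2k}=1/(s-u_{1})$ over the $n$ $X$-segments gives $z_{1}z_{2}\cdots z_{2n}=1/(s-u_{1})^{n}$, while the product $z_{2k}z_{2k+1}=1/(s+u_{2})$ over the $n-1$ interior $Y$-segments gives $z_{2}\cdots z_{2n-1}=1/(s+u_{2})^{n-1}$; dividing yields $z_{1}z_{2n}=(s+u_{2})^{n-1}/(s-u_{1})^{n}$. Substituting the rationalized smaller root $z_{1}=2/[c_{1}+\sqrt{c_{1}^{2}-4(s+u_{2})}]$ and the larger root $z_{2n}=[c_{n+1}+\sqrt{c_{n+1}^{2}-4(s+u_{2})}]/[2(s+u_{2})]$ produces (\ref{transcendental}). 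The interval $u_{1}<s\leq(u_{2}-1)^{2}/4$ is forced by $s>u_{1}$ (so that $T_{X}$ is real) and by non-negativity of the discriminant $(u_{2}-1)^{2}-4s$ of the $Y_{1}$-quadratic. Finally, the bound (\ref{turns}) follows from strict monotonicity of $T_{X}(s)$ on $(u_{1},\infty)$: $T_{X}(s)\geq T_{X}(s_{+})$ for every admissible $s$, so $T_{n}\geq nT_{X}(s_{+})$, and a candidate for time-optimality must satisfy $T_{n}\leq T_{0}$, which forces $n\leq[T_{0}/T_{X}(s_{+})]$. The step I expect to be the main obstacle is the root-and-sign bookkeeping in the computations of $T_{I}$, $T_{F}$, and the transcendental equation, which threads the closed-orbit geometry of $Y$-trajectories together with the sign data from the Maximum Principle in (\ref{par_2}).
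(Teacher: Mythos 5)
Your proposal is correct and follows essentially the same route as the paper: first integrals anchored at $(1,0)$ and $(\gamma,0)$ for $T_0$, the common ratio $s$ from Lemma \ref{prop:ratio} combined with Lemma \ref{switch_time} for $T_X$, $T_Y$, the alternating-root quadratics $(s+u_2)z^2-c_kz+1=0$ for $T_I$, $T_F$ and the telescoped transcendental equation, and the monotonicity of $T_X(s)$ for the bound on $n$. Your organization of the telescoping via the root products $z_{2k-1}z_{2k}=1/(s-u_1)$ and $z_{2k}z_{2k+1}=1/(s+u_2)$ is only a cosmetic repackaging of the paper's per-segment ratio identities, and the sign/root bookkeeping you flag as the main obstacle is exactly the point the paper settles by showing the opposite choice for the first root yields an equation with no solution since $c_{n+1}>c_1$.
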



\begin{proof}
In Fig.\ \ref{fig:upper} we show a trajectory with one switching point
$B(\kappa,\mu)$. The coordinates of this point satisfy equations (\ref{int_cur_alpha}) and (\ref{int_cur_beta}) with $\alpha=1$ and $\beta=\gamma$,
from which we find
\begin{equation}
\kappa^{2}=\frac{u_{2}\gamma^{4}+1+\gamma^{2}(u_{1}-1)}{\gamma^{2}(u_{1}%
+u_{2})}\nonumber
\end{equation}
Using (\ref{x1_in_X}) with $\alpha=1$ and (\ref{x1_in_Y}) with $\beta=\gamma$,
we find that the necessary transfer time is given by (\ref{time_0}). Next
consider the case with $n$ turns and $2n$ switching points $(\kappa_{j}%
,\mu_{j})$, Fig.\ \ref{fig:spiral}, with constant ratio $\mu_{j}^{2}/\kappa
_{j}^{2}=s$. The first switching point satisfies the equations
\begin{align}
\label{curv_1_b}\mu_{1}^{2}+u_{2}\kappa_{1}^{2}+\frac{1}{\kappa_{1}^{2}}  &
=c_{1},\\
\label{curv_1_a}
\mu_{1}^{2}-u_{1}\kappa_{1}^{2}+\frac{1}{\kappa_{1}^{2}}  &  =c,
\end{align}
where $c_{1}$ is given by (\ref{c_1}) and $c=-u_{1}\alpha_{1}^{2}+1/\alpha
_{1}^{2}$, while the second switching point satisfies
\begin{align}
\label{curv_2_b}\mu_{2}^{2}+u_{2}\kappa_{2}^{2}+\frac{1}{\kappa_{2}^{2}}  &
=c_{2},\\
\label{curv_2_a}
\mu_{2}^{2}-u_{1}\kappa_{2}^{2}+\frac{1}{\kappa_{2}^{2}}  &  =c,
\end{align}
where $c_{2}=u_{2}\beta_{1}^{2}+1/\beta_{1}^{2}$. The constants $c_{1}$ and $c_{2}$
characterize the first and second $Y$-segments of the trajectory, while
the constant $c$ characterizes the $X$-segment joining them. Subtracting
(\ref{curv_1_a}) from (\ref{curv_2_a}) and using Lemma \ref{prop:ratio} which
assures that $\kappa_{1}\neq\kappa_{2}$ (consecutive switching points are not
symmetric with respect to $x_{1}$-axis) we find that
\begin{equation}
\label{s_equation}s-u_{1}-\frac{1}{\kappa_{1}^{2}\kappa_{2}^{2}}=0.
\end{equation}
But from (\ref{curv_1_b}), (\ref{curv_2_b}) and the constant ratio relation we
find
\begin{align}
\kappa_{1}^{2}  &  =\frac{2}{c_{1}+\sqrt{c_{1}^{2}-4(s+u_{2})}},\nonumber\\
\kappa_{2}^{2}  &  =\frac{c_{2}+\sqrt{c_{2}^{2}-4(s+u_{2})}}{2(s+u_{2})},\nonumber
\end{align}
where, while solving the quadratic equations we used the $-$ sign for the
first and the $+$ sign for the second switching point. The choice of sign for
the first switching point will be justified below, while the choice of sign
for consecutive switching points should be alternating to avoid picking the
symmetric image of the previous point. Using these relations,
(\ref{s_equation}) takes the form
\begin{equation}
\label{transcendental2}\frac{c_{1}+\sqrt{c_{1}^{2}-4(s+u_{2})}}{c_{2}%
+\sqrt{c_{2}^{2}-4(s+u_{2})}}=\frac{s-u_{1}}{s+u_{2}}.\nonumber
\end{equation}
By repeating the above procedure for all the consecutive pairs of switching
points, we find
\begin{equation}
\label{transcendental3}\frac{c_{i}+\sqrt{c_{i}^{2}-4(s+u_{2})}}{c_{i+1}%
+\sqrt{c_{i+1}^{2}-4(s+u_{2})}}=\frac{s-u_{1}}{s+u_{2}},\,i=1,2,\ldots,n.\nonumber
\end{equation}
Multiplying the above equations we obtain (\ref{transcendental}), \emph{one}
transcendental equation for the ratio $s$. If we choose the $+$ sign in the
quadratic equation for $\kappa_{1}^{2}$, we obtain an equation similar to
(\ref{transcendental}) but with inverted left hand side. It is $c_{n+1}%
>c_{1}\Leftrightarrow(\gamma^{2}-1)(u_{2}\gamma^{2}-1)>0$ and $c_1,c_{n+1}>0$, so
\begin{equation}
\frac{c_{n+1}+\sqrt{c_{n+1}^{2}-4(s+u_{2})}}{c_{1}+\sqrt{c_{1}^{2}-4(s+u_{2}%
)}}>1>\left(  \frac{s-u_{1}}{s+u_{2}}\right)  ^{n},\nonumber
\end{equation}
and the corresponding transcendental equation has no solution. Note that the
left hand side of (\ref{transcendental}) is a decreasing function of $s$ while
the right hand side is an increasing one, so if a solution exists, it is
unique. The ratio is bounded below by the requirement $s/u_{1}>1$ and above by
$c_{1}^{2}-4(s+u_{2})\geq0\Leftrightarrow s\leq s_+=(u_{2}-1)^{2}/4$. This is also
the maximum value of $s$ on the first $Y$-segment (\ref{curv_1_b}). Once we
have calculated this ratio, we can find the time interval between consecutive
switchings using (\ref{switch_X}) for an $X$-segment and (\ref{switch_Y}) for
a $Y$-segment, relations obtained from Lemma \ref{switch_time} on the inter-switching time. Observe that the times along all intermediate $X$- (respectively $Y$-) trajectories are equal. The initial time interval $T_{I}$ (from the starting point up
to the first switching) and the final time interval $T_{F}$ (from the last
switching up to the target point) can be easily calculated and are given in (\ref{time_in}) and (\ref{time_fi}), respectively. The total duration $T_{n}$ of
the trajectory with $n$ turns joining the points $(1,0)$ and $(\gamma,0)$ is
given by (\ref{time_n}). Observe that $T_n(s)> nT_X(s)\geq nT_X(s_+)$, where the last inequality follows from the fact that $T_X$ is a decreasing function of $s$, see (\ref{switch_X}). A solution with $n$ turns can be candidate for optimality only if the number of turns is bounded as in (\ref{turns}). Otherwise we have $T_n(s)>T_0$ and the one-switching strategy is faster.
\end{proof}

We can find an approximate solution of (\ref{transcendental}) by setting
$s=u_{1}$ (the lower limit) in the left hand side. We then obtain
\begin{equation}
\hat{s}_{n}=\frac{u_{1}+u_{2}\sqrt[n]{C}}{1-\sqrt[n]{C}},
\end{equation}
where
\begin{equation}
C=\frac{c_{1}+\sqrt{c_{1}^{2}-4(u_{1}+u_{2})}}{c_{n+1}+\sqrt{c_{n+1}^{2}%
-4(u_{1}+u_{2})}}.
\end{equation}
Due to the monotonicity of the right and left hand sides of
(\ref{transcendental}), it is $\hat{s}_{n}\geq s_{n}$, where $s_{n}$ is the
exact solution. This approximation is good for $s$ close to $u_1$ and thus for small
$n$. As the $x_{1}$ coordinate $\gamma$ of the target point increases, the
left hand side of (\ref{transcendental}) becomes less sensitive to variations
in $s$, making the approximation more accurate. 

Using Theorem \ref{prop:time} we can find the times $T_{n}$ for a specific
target $(\gamma, 0)$ and compare them to obtain the minimum time. This is done
in the next section for specific values of the control bounds.

\begin{figure}[t]
\centering
\begin{tabular}
[c]{c}%
\subfigure[$\ $]{ \label{fig:times2} \includegraphics[width=.8\linewidth]{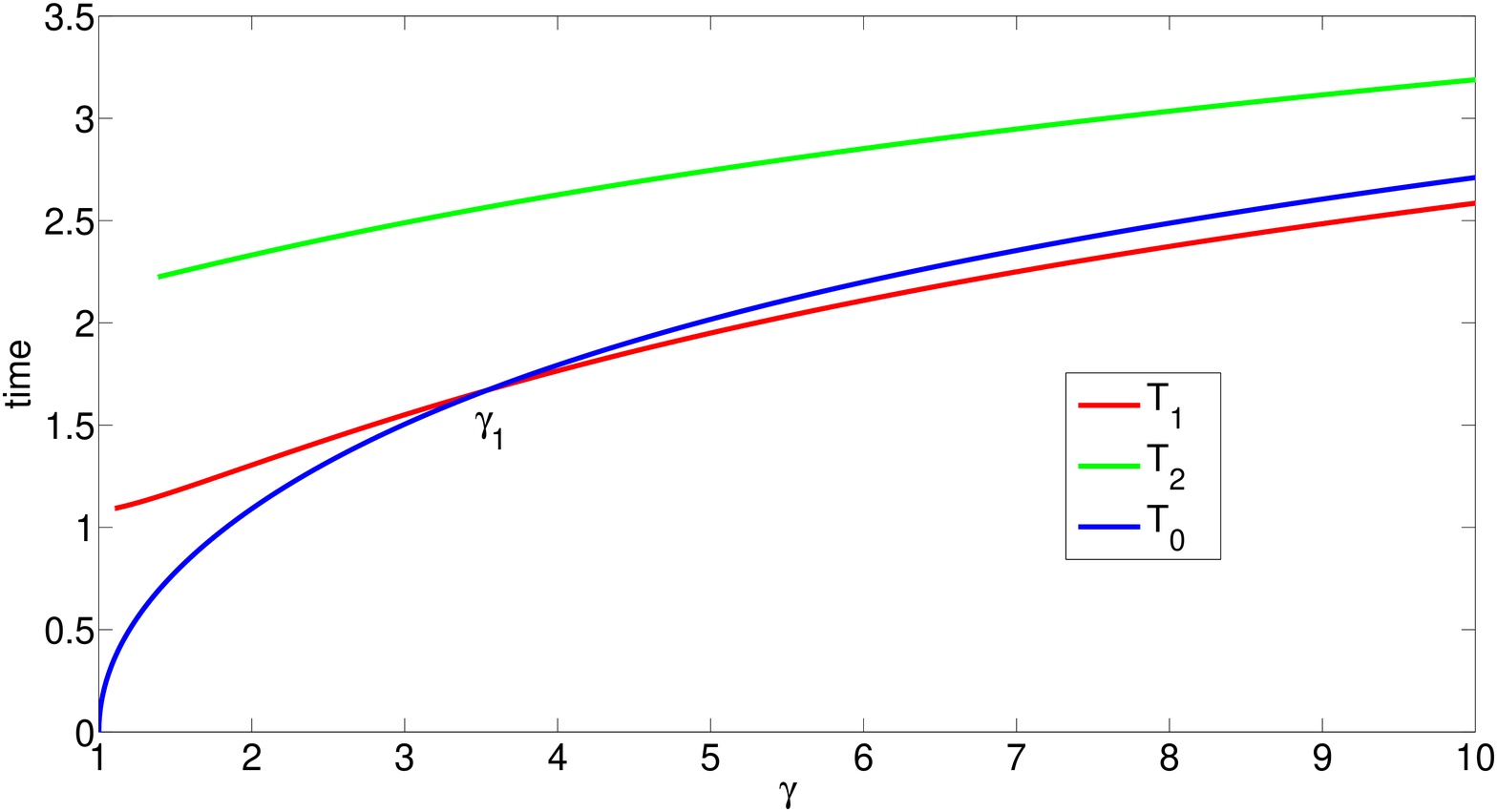}}\\
\subfigure[$\ $]{ \label{fig:synthesis2} \includegraphics[width=.8\linewidth]{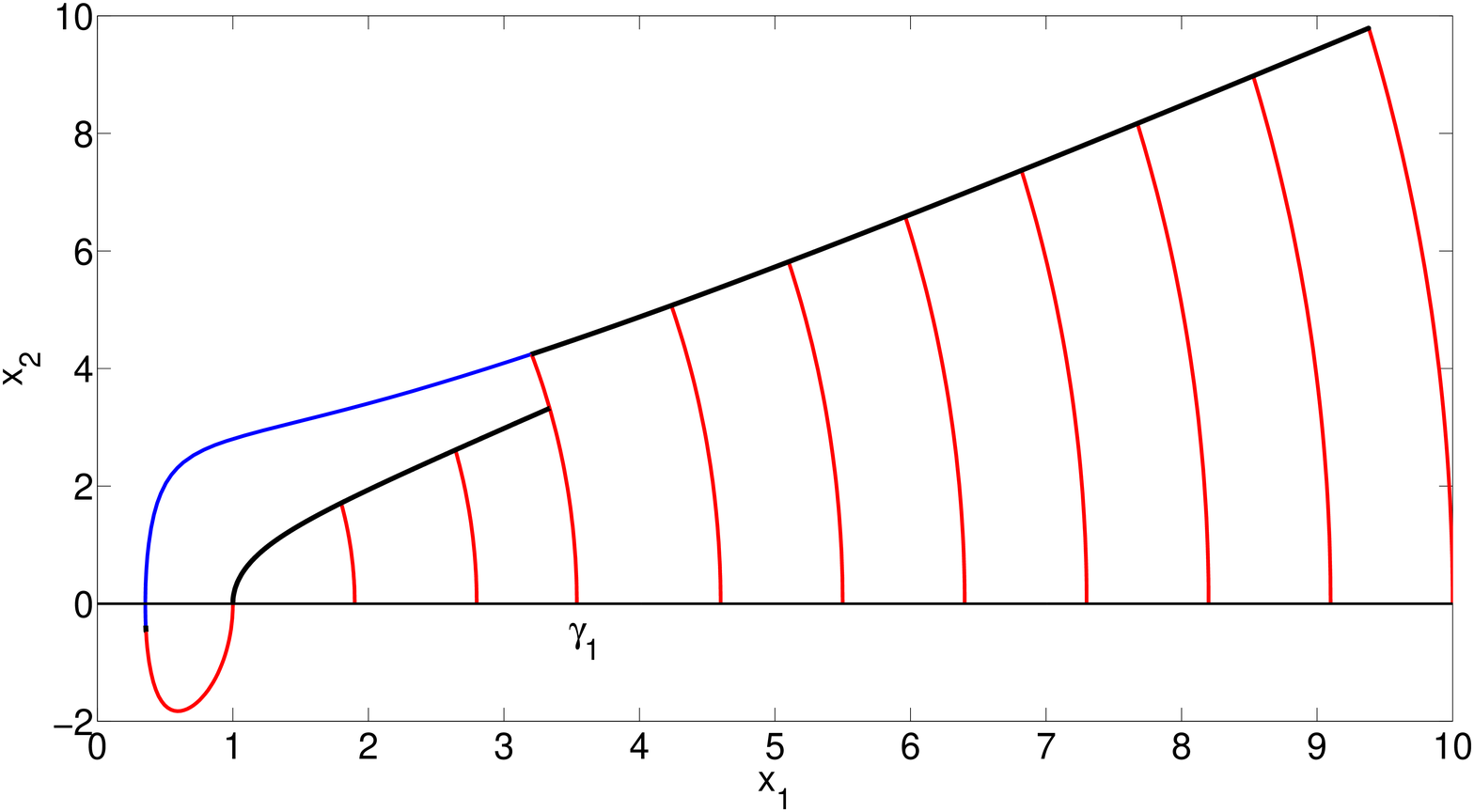}}
\end{tabular}
\caption{(a) Transfer times corresponding to zero, one and two turns for
$u_{1}=1, u_{2}=8, \gamma\in[1,10]$. (b) Switching curves (black curves) and
characteristic optimal trajectories starting from $(1,0)$.}%
\label{fig:twomax}%
\end{figure}

\begin{figure}[t]
\centering
\begin{tabular}
[c]{c}%
\subfigure[$\ $]{ \label{fig:times4} \includegraphics[width=.8\linewidth]{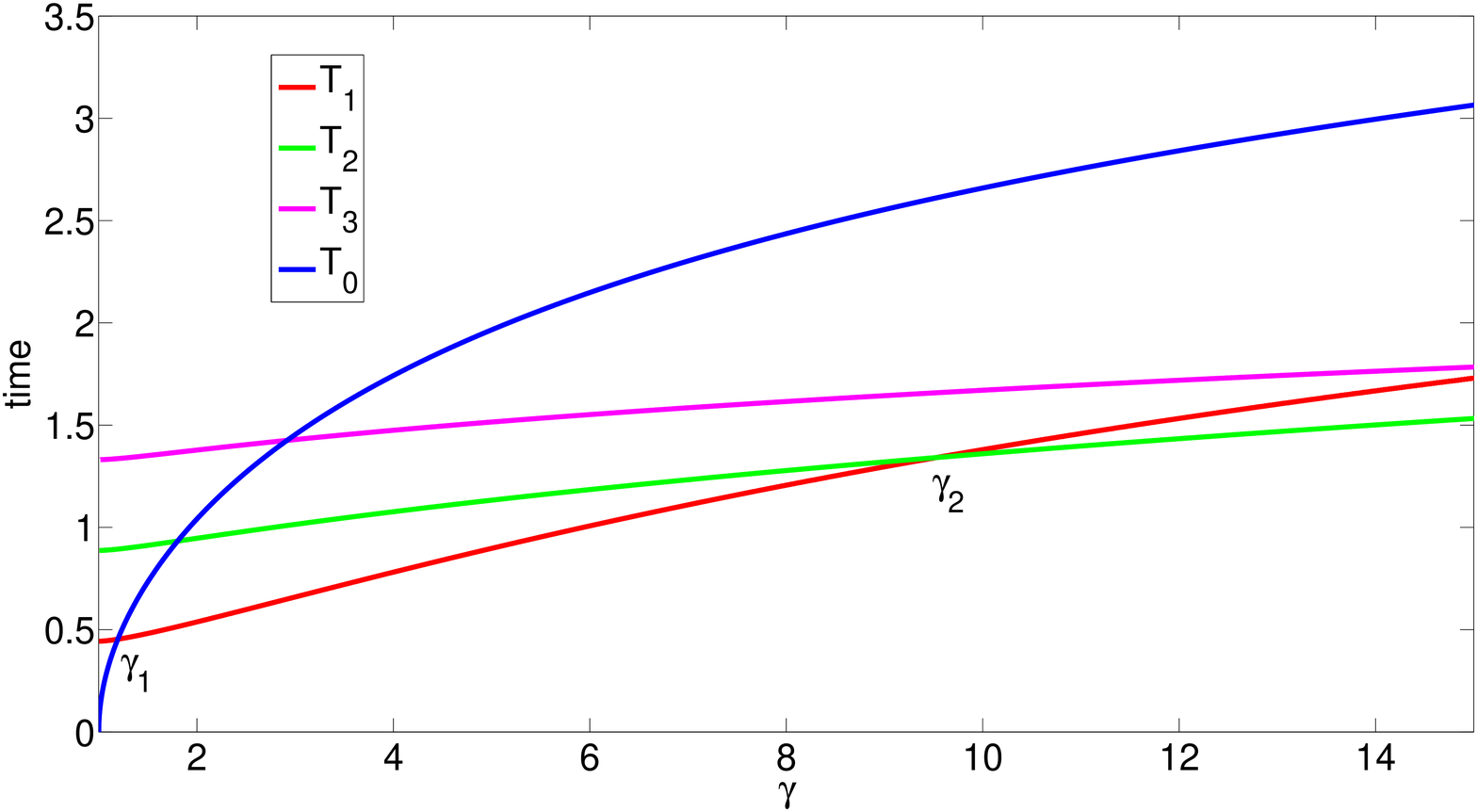}}\\
\subfigure[$\ $]{ \label{fig:synthesis4} \includegraphics[width=.8\linewidth]{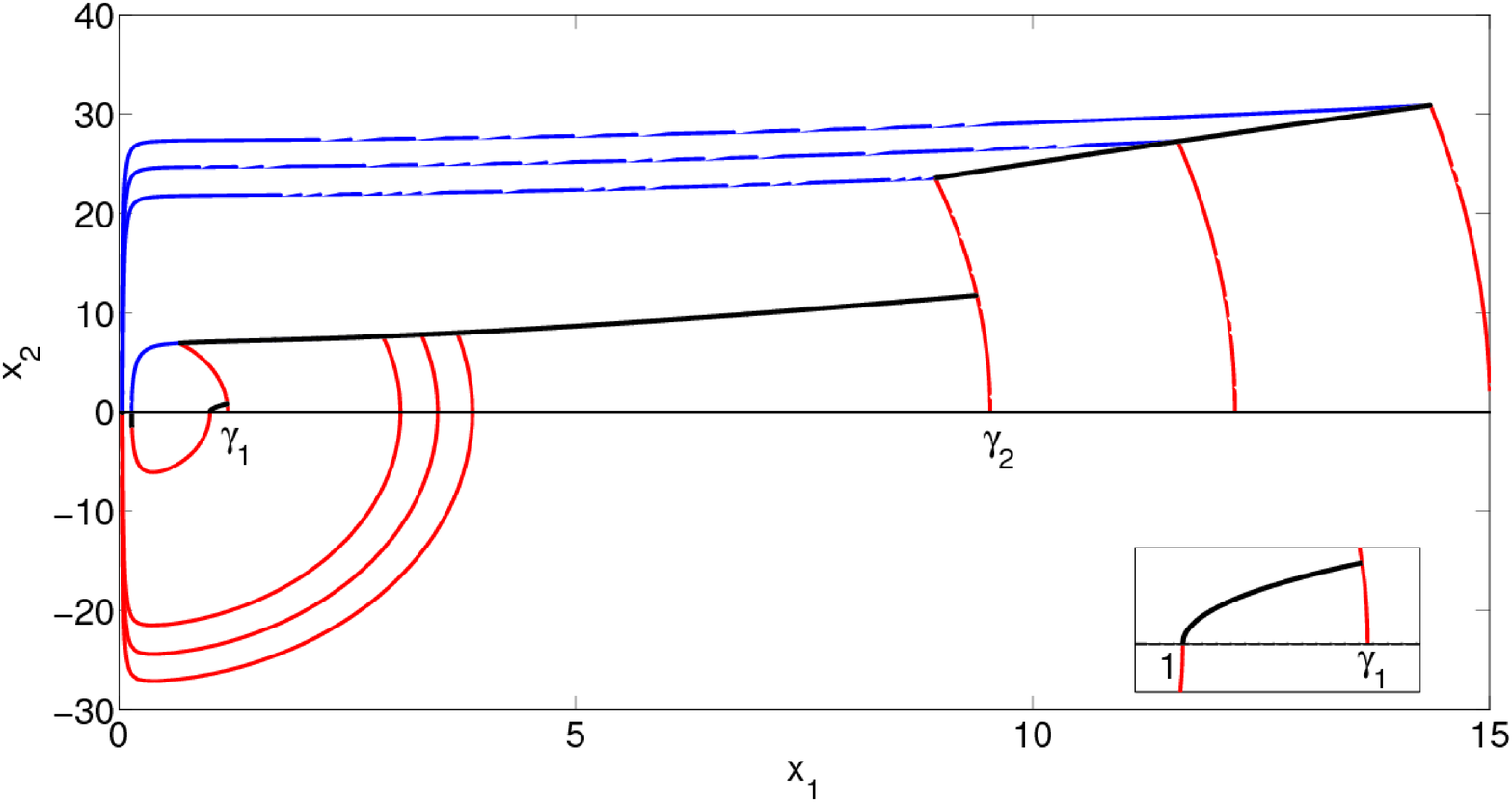}}
\end{tabular}
\caption{(a) Transfer times corresponding to zero, one, two and three turns
for $u_{1}=1, u_{2}=50, \gamma\in[1,15]$.(b) Switching curves (black curves)
and characteristic optimal trajectories starting from $(1,0)$.}%
\label{fig:fourmax}%
\end{figure}

\section{Examples}

In Fig.\ \ref{fig:times2} we plot the times $T_{0}, T_{1}$ and $T_{2}$ from Theorem \ref{prop:time}, corresponding to zero, one and two turns, for
$u_{1}=1, u_{2}=8$ and $\gamma\in[1,10]$. For $\gamma\leq\gamma_{1}$ the
strategy with zero turns (one switching) is optimal, while for $\gamma
\geq\gamma_{1}$ it is the strategy with one turn (up to the range of $\gamma$
plotted). The point $(\gamma_{1},0)$ can be reached with both strategies in
equal time, that is, it belongs to the cut-locus \cite{Berger03} of these two control sequences from $(1,0)$. Note that the strategies
with one and two turns are feasible after some $\gamma>1$, where the
transcendental equation (\ref{transcendental}) has a solution. In Fig.\
\ref{fig:synthesis2} we plot the switching curves (black curves) as well as
some characteristic optimal trajectories starting from $(1,0)$. For
$\gamma\leq\gamma_{1}$ the optimal trajectory starts with an $X$-segment that
coincides with the switching curve (black curve) passing from $(1,0)$. It
switches at some point and then travels along a $Y$-segment (red curve) to
meet the $x_{1}$-axis. For $\gamma\geq\gamma_{1}$ the optimal trajectory
starts with a $Y$-segment (red curve passing from $(1,0)$) and switches at
some point in the tiny black area of this curve to an $X$-segment (blue curve).
Then it meets at some point the second switching curve on the upper quadrant
and changes to a $Y$-segment (red curve) that hits the $x_{1}$-axis at the target
point. Note that the optimal trajectories between the two switchings (blue
curves) are very close to the second switching curve on the upper quadrant and
they are not shown entirely.

In Fig.\ \ref{fig:times4} we plot the times $T_{0}, T_{1}, T_{2}$ and $T_{3}$ from Theorem \ref{prop:time}, corresponding to zero, one, two and three turns, for
$u_{1}=1, u_{2}=50$ and $\gamma\in[1,15]$. Again, for small $\gamma$ the
one-switching strategy is optimal and after some $\gamma=\gamma_{1}$ the
one-turn strategy becomes faster, but there is also a $\gamma=\gamma_{2}$
beyond which the two-turn strategy is optimal (up to the range of $\gamma$
plotted). The point $(\gamma_{2},0)$ thus belongs to the cut-locus of the one- and two-turn control sequences from $(1,0)$
since it can be reached with one or two turns in equal time. In Fig.\
\ref{fig:synthesis4} we plot the switching curves (black curves) along with
some characteristic optimal trajectories starting from $(1,0)$. For
$\gamma\geq\gamma_{2}$ the optimal trajectory makes an additional turn. This
is demonstrated by the three adjacent $Y$-segments (red curves), which switch
close to $0$ to the corresponding $X$-segments (blue curves), on a tiny
switching curve which is hardly seen. In turn, these trajectories switch on
the third switching curve on the upper quadrant to $Y$-segments (red curves)
that hit the $x_{1}$-axis at the corresponding target points.



\section{Conclusion}

In this article we formulated frictionless atom cooling in harmonic traps in an optimal control language and solved the corresponding time-optimal problem for a fixed initial condition $(1,0)$ and for varying terminal condition $(\gamma,0), \gamma>1$. The optimal synthesis was obtained and an interesting switching structure was revealed. The results presented here can be immediately extended to the frictionless cooling of a two-dimensional Bose-Einstein condensate confined in a parabolic trapping potential \cite{Muga09} and even to the implementation of a quantum dynamical microscope, an engineered controlled expansion that allows to scale up an initial many-body state of an ultracold gas by a desired factor while preserving the quantum correlations of the initial state \cite{delCampo11}. The above techniques are not restricted to atom cooling but are applicable to areas as diverse as adiabatic quantum computing and finite time thermodynamic processes.



\end{document}